\newtheorem{thm}{Theorem}
\newtheorem{thmm}{Theorem}
\newtheorem{lem}[thm]{Lemma}
\newtheorem{prop}{Proposition}
\theoremstyle{definition}
\theoremstyle{remark}
\DeclareMathAlphabet{\mathsc}{OT1}{cmr}{m}{sc}
\newcommand\cC{{\mathscr C}}
\newcommand\cK{{\mathcal K}}
\newcommand\cL{{\mathscr L}}
\newcommand\cZ{{\operatorname Z}}
\newcommand\bC{{\mathbb C}}
\newcommand\bN{{\mathbb N}}
\newcommand\bP{{\mathbb P}}
\newcommand\bR{{\mathbb R}}
\newcommand\bT{{\mathbb T}}
\newcommand\bZ{{\mathbb Z}}
\newcommand{\one}{\mathbf{1}}
\renewcommand{\L}[2]{\mathbf{L^{#1}}(#2)}
\newcommand{\norm}[1]{\left\lVert{#1}\right\rVert}
\newcommand{\abs}[1]{\left\lvert{#1}\right\rvert}
\newcommand{\ctaup}{C_{1}}
\newcommand{\cgam}{C_{\gamma}}
\newcommand{\ceta}{C_{2}}
\newcommand{\cly}{C_{\lambda}}
\newcommand{\cgrowth}{C_{\beta}}
\newcommand{\cm}{C_{3}}
\newcommand{\cother}{C_{4}}
\newcommand{\csurminus}{C_{5}}
\newcommand{\cjp}{C_{6}}
\newcommand{\ctauq}{C_{7}}
\newcommand{\cend}{C_{8}}
\newcommand{\ctransrate}{\gamma}
\newcommand{\climsup}{\gamma_{1}}
\newcommand{\cten}{\gamma_{2}}
\newcommand{\cyes}{\gamma_{3}}
\newcommand{\cseven}{\gamma_{4}}
\newcommand{\ceight}{\gamma_{5}}
\newcommand{\csurplus}{\gamma_{6}}
\newcommand{\cyonem}{\gamma_{7}}
\newcommand{\calmost}{\gamma_{8}}
\newcommand{\cone}{\rho_{1}}
\newcommand{\ctwo}{\rho_{2}}
\newcommand{\cfour}{\rho}
\newcommand{\nn}{n}
\newcommand{\Hhh}{H}
\newcommand{\measure}{\mathbf{m}}
\newcommand{\base}{\bT^{1}}
\numberwithin{equation}{section}
\begin{document}
\author{Oliver Butterley}
\address{Oliver Butterley\\ Fakult\"at f\"ur Mathematik\\ 
Universit\"at Wien\\
 Oskar-Morgenstern-Platz 1, 1090 Wien, Austria}
\email{oliver.butterley@univie.ac.at}
 
 \author{Peyman Eslami}
\address{Peyman Eslami\\ Dipartimento di Matematica\\
II Universit\`{a} di Roma (Tor Vergata)\\
Via della Ricerca Scientifica, 00133 Roma, Italy}
\email{eslami@mat.uniroma2.it}

\title{Exponential Mixing for Skew Products with Discontinuities}
\keywords{Exponential mixing, Skew product, Oscillatory cancelation, Transfer operator, Partially hyperbolic}
\subjclass[2010]{Primary: 37A25;   Secondary:  37C30, 37D50}
% 37D50 Hyperbolic systems with singularities (billiards, etc.
% 37A25 Ergodicity, mixing, rates of mixing 
%  37C30 Zeta functions, (Ruelle-Frobenius) transfer operators, and other functional analytic techniques in dynamical systems 
\thanks{O.B. was supported by the Austrian Science Fund, Lise Meitner position M1583.
P.E. was supported by an INdAM-COFUND Marie Curie fellowship.
Both authors are grateful to the hospitality of Carlangelo~Liverani and ERC Advanced Grant MALADY (246953).}
%\date{23\textsuperscript{rd} May 2014}
\bibliographystyle{abbrv}
\begin{abstract}
We consider the skew product   $F: (x,u) \mapsto (f(x), u+\tau(x))$, 
where the base map $f : \bT^{1} \to \bT^{1}$ is piecewise $\cC^{2}$, covering and uniformly expanding, 
and the fibre map $\tau : \bT^{1} \to \bR$ is piecewise $\cC^{2}$.
We show the dichotomy that either this system mixes exponentially or
$\tau$ is cohomologous (via a Lipschitz function) to a piecewise constant.
\end{abstract}
\maketitle
\thispagestyle{empty}

\section{Introduction and Results}
In the study of dynamical systems, establishing the rate of mixing of a given system is of foremost importance.
It is a fundamental property describing the rate at which information about the system is lost. 
More importantly the rate of mixing (or typically slightly stronger information which is obtained whilst proving the rate of mixing) can be used to prove many other statistical properties (see, for example \cite[\S9]{Keller:1989} and \cite[Chapter~7]{MR2229799}).
Furthermore, of  physical relevance, these strong results associated to good rates of mixing are crucially used when studying weakly coupled systems~\cite{MR2530174,MR2842975}.

Rate of mixing results were first obtained for expanding maps and for hyperbolic maps (see \cite{Li1995} and references within),
then also for slower mixing, non-uniformly hyperbolic systems (e.g.,~\cite{Young1998, Young1999, MR1946554}).
In the case of hyperbolic flows or skew products like the one studied here one direction is completely neutral, with no expansion or contraction. These systems are not hyperbolic but merely partially hyperbolic.
 In these situations there is a  mechanism at work, different to hyperbolicity, but which  is nonetheless sufficient for producing good statistical properties including exponential rate of mixing. 
Dolgopyat~\cite{D}, extending work of Chernov~\cite{C}, succeeded in developing technology for studying this neutral mechanism and consequently proved exponential mixing for mixing Anosov flows when the stable and unstable invariant foliations are both $\cC^{1}$. 
Using and developing these ideas various results followed~\cite{Li1, baladi2005edc, avila2005emt,Ts}. 
However all the above systems were rather smooth or at least Markov.
Our knowledge concerning this same neutral mechanism in systems with discontinuities is less than satisfactory at present.
As far as the authors are aware, results of exponential mixing for hyperbolic flows with discontinuities are limited at present to the work of Baladi and Liverani~\cite{Baladi:2011uq}  for piecewise smooth 3D hyperbolic flows which preserve a contact structure and the work of Obayashi~\cite{Obayashi:2009} in the case of suspension semiflows over expanding maps with discontinuities which admit a Young tower. 

\begin{samepage}
In this article we study  the 2D skew product map $F:\bT^{2} \to \bT^{2}$ defined by 
\[
F: (x,u) \mapsto (f(x), u+\tau(x)).
\] \nobreak
  The base map $f : \bT^{1} \to \bT^{1}$ is required to be $\cC^{2}$ except for a finite number of discontinuities and admit a $\cC^{2}$ extension to the closure of the intervals of smoothness. 
  \end{samepage}
  Also $f$ is required to be  uniformly expanding and covering.\footnote{Covering implies that the unique absolutely continuous  invariant probability density is bounded away from zero~\cite{Liverani:1995aa}.}
The fibre map $\tau : \bT^{1} \to \bR$ is similarly required to be $\cC^{2}$ except for a finite number of discontinuities and admit a $\cC^{2}$ extension to the closure of the intervals of smoothness.\footnote{Here and throughout the document, if $u\in \base$, $s\in \bR$ then we consider $u+s\in \base$ in the natural sense that $\base = \bR \diagup \bZ$.}
At no stage do we require the map to be Markov, nor do we work with tower constructions to reduce to the Markov case.
Since the map $f$ is piecewise $\cC^{2}$ and uniformly expanding it is  known that there exists $\nu$ an  $f$-invariant probability measure which is absolutely continuous with respect to Lebesgue. 
Since the dynamics in the fibres is nothing more than a rigid rotation this means that  $\mu := \nu \times \operatorname{Leb}$ is an $F$-invariant probability measure on $\bT^{2}$.
Given observables $g,h : \bT^{2} \to \bC$ the correlation is defined as usual 
$\operatorname{Cor}_{g,h}(n) 
:=  \mu( g \cdot h \circ F^{n}) 
- \mu(g) \cdot \mu(h)$.
We say that $F:\bT^{2} \to \bT^{2}$ mixes exponentially if there exists $\zeta>0$ and for each pair of  H\"older continuous observables $g,h$  there exists  $C_{g,h}>0$ such that
$ \abs{ \operatorname{Cor}_{g,h}(n)   }
\leq 
C_{g,h}e^{-n\zeta}$, for all $n\in \bN$.

Our main result is the following.
\begin{thmm}
\label{thm:main}
 Let $F: \bT^{2}\to \bT^{2}$ be a piecewise-$\cC^{2}$ skew product over an expanding map as described above. 
 Either $F$ mixes exponentially 
 or 
 there exists Lipschitz $\theta : \base \to \bR$ and piecewise constant $\chi: \base \to \bR$ such that
 $\tau = \theta \circ f - \theta + \chi$. The discontinuites of $\chi$  only occur at points where either $f$ or $\tau$ are discontinuous.
\end{thmm}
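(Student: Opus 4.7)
The plan is to prove the contrapositive via a Fourier/twisted-transfer-operator analysis in the fibre direction, following the Dolgopyat paradigm adapted to piecewise smooth dynamics. First I would expand H\"older observables $g, h : \bT^{2} \to \bC$ in Fourier series along the fibre variable $u$ and reduce $\operatorname{Cor}_{g,h}(n)$ to a sum over $b \in \bZ$ of $\nu$-integrals of the form $\nu(\tilde g_{b} \cdot \cL_{b}^{n} \tilde h_{b})$, where $\cL_{b}$ is the twisted transfer operator
\[
(\cL_{b} \varphi)(x) = \sum_{y \in f^{-1}(x)} \frac{e^{i b \tau(y)}}{\abs{f'(y)}} \varphi(y),
\]
acting on a suitable Banach space of BV-type functions on $\base$. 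Exponential mixing will follow once I establish a uniform bound $\norm{\cL_{b}^{n}} \leq C(1+\abs{b})^{\alpha} e^{-\zeta n}$ for some fixed $\alpha$ and some $\zeta>0$ and all $n$ large, because Fourier decay of $\tilde g_{b},\tilde h_{b}$ for H\"older $g,h$ absorbs the polynomial loss in $b$.

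The case $b = 0$ is the standard Lasota--Yorke spectral gap for piecewise expanding $\cC^{2}$ covering maps. For the remainder I would separate bounded from large $b$. For bounded $b \neq 0$, a quasi-compactness plus perturbation argument for the family $b \mapsto \cL_{b}$ reduces matters to ruling out peripheral eigenvalues of $\cL_{b}$ other than through a cohomological mechanism landing in the alternative of the theorem. For large $\abs{b}$ the goal is a Dolgopyat-type oscillatory-cancellation estimate: a uniform-in-$b$ Lasota--Yorke inequality, an invariant cone of densities adapted to BV and to the discontinuities of $f$, and an $\L{2}{\nu}$ contraction of $\cL_{b}^{N}$ on this cone after $N \sim \log\abs{b}$ iterates, obtained from the non-alignment of the phases $e^{i b S_{n}\tau}$ across distinct inverse branches.

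The cohomological obstruction appears precisely when the Dolgopyat cancellation fails. In that case, by compactness and the Chernov--Dolgopyat--Liverani scheme, one extracts some $b \neq 0$, a $\lambda \in \bC$ with $\abs{\lambda} = 1$, and a function $\eta$ of constant modulus on $\base$ satisfying $\cL_{b}\eta = \lambda \eta$. Writing $\eta = e^{i b \theta}$ on each smoothness interval and reading $\cL_{b}\eta = \lambda \eta$ pointwise forces, along each inverse branch, the identity
\[
e^{i b (\tau(x) + \theta(x) - \theta(f(x)))} = \lambda,
\]
so that $\tau + \theta - \theta \circ f$ is locally constant modulo $2\pi/b$. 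Bootstrapping regularity of $\eta$ from the invariance relation (distortion plus covering give Lipschitz control) promotes $\theta$ to a globally Lipschitz function on $\base$, and the residual $\chi := \tau + \theta - \theta \circ f$ is then piecewise constant with discontinuities only at those of $f$ or $\tau$, which is exactly the stated alternative.

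The hard part will be the Dolgopyat step for large $\abs{b}$ in the discontinuous setting. The classical argument is written for $\cC^{1}$ cones with detailed geometric control of how inverse branches stretch and how their phases interfere; here I must work with variation norms, track boundary contributions from the proliferating discontinuities of $f^{n}$ and of $\tau_{n} := \sum_{k<n}\tau \circ f^{k}$, and keep all Lasota--Yorke constants independent of $b$. The covering hypothesis on $f$, with the resulting lower bound on the invariant density $d\nu/d\operatorname{Leb}$, is what lets me transfer an $\L{2}{\nu}$ contraction on cones to an operator-norm bound, while the piecewise $\cC^{2}$ regularity of $\tau$ supplies the stationary-phase-style inputs that drive the cancellation.
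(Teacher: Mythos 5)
Your overall framework — Fourier expansion in the fibre, twisted transfer operators $\cL_b$, a uniform Lasota--Yorke inequality in BV, and Dolgopyat-type oscillatory cancellation for large $\abs{b}$ — matches the paper's Sections 3--5 closely. However, the mechanism by which you propose to extract the cohomological alternative is substantially different, and it has a genuine gap.

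You claim that failure of the Dolgopyat cancellation estimate yields, ``by compactness and the Chernov--Dolgopyat--Liverani scheme,'' a specific $b\neq 0$, a unimodular $\lambda$, and a constant-modulus eigenfunction $\eta$ with $\cL_b\eta=\lambda\eta$, from which you read off $e^{ib(\tau+\theta-\theta\circ f)}\equiv\lambda$. This does not follow. The Dolgopyat estimate is a \emph{uniform} contraction bound over $\abs{b}\geq b_0$ with an explicit $n(b)\sim\log\abs{b}$; its negation is merely that the quantitative bound fails along some sequence $b_j\to\infty$, and no limiting or compactness argument lands on an exact peripheral eigenfunction at a fixed $b$. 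Worse, the object you would extract this way is a ``multiplicative'' obstruction ($e^{ib\tau}$ cohomologous to a constant modulo $2\pi/b$ for a particular $b$), which is not the theorem's conclusion: the theorem asserts a \emph{real-valued} cohomological equation $\tau=\theta\circ f-\theta+\chi$ with $\chi$ piecewise constant (taking arbitrary real values, not confined to a lattice $2\pi\bZ/b$). The paper sidesteps both problems by placing the dichotomy one level earlier, at the geometric level of transversality of unstable cones (Proposition~\ref{prop:transversality} and Lemma~\ref{lem:allequivalent}): either the fraction of non-transversal preimage pairs decays exponentially, and this quantitative input drives the oscillatory cancellation uniformly in $b$, or the cones never become transversal, which forces a measurable $F$-invariant unstable direction $\ell$ satisfying $\tau'=f'\cdot\ell\circ f-\ell$; integrating (after showing $\ell\in\mathbf{BV}$ via a telescoping sum over inverse branches) produces the Lipschitz $\theta$ and piecewise constant $\chi$. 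So the missing ingredient in your plan is precisely the identification of a checkable geometric condition that simultaneously powers the Dolgopyat estimate when it holds and degenerates directly into the real cohomological equation when it fails. Without that, the step from ``cancellation fails'' to ``there is a coboundary structure'' is unsupported.

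A smaller issue: the regularity bootstrap ``distortion plus covering give Lipschitz control'' for $\theta$ is too quick. On a BV-type Banach space a peripheral eigenfunction would a priori be only of bounded variation, not Lipschitz. The paper instead shows the \emph{derivative} $\ell$ is of bounded variation (by expressing it as a convergent series $\sum_j (\tau'/(f^j)')\circ g^j$ along a chosen inverse branch), and then $\theta(y)=\int_0^y\ell$ is Lipschitz. That intermediate object $\ell$, and the argument that it lies in BV, is what actually carries the regularity.
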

\noindent
The remainder of this document is devoted to the proof of the above theorem.
The basic idea is from Dolgopyat~\cite{D}.
However we combine the best technology from the susequent articles~\cite{Li1,baladi2005edc,avila2005emt,Ts} in order to deal with the present difficulties, in particular the problems arising from  the discontinuities.

We note that the issue of the discontinuities could in theory be approached by using a tower construction and so reducing to the case of a base map which is Markov. This has been done by Obayashi~\cite{Obayashi:2009} in the case of suspension semiflows over expanding maps with discontinuities. However one particular problem with such tower constructions is that the tower is very sensitive to changes in the underlying system and so  important questions, for example determining the behaviour of statistical properties under perturbation of the original system, become completely unapproachable.

From a technical point of view we are forced in two opposing directions. To deal with discontinuities we are forced to consider densities of rather low regularity.  However we also need to take advantage of Dolgopyat's oscillatory cancellation argument which requires some good degree of regularity for the density.

The result for skew products is closely related to the analogous result for suspension semiflows. At a techincal level this can be seen from the twisted transfer operator (introduced below~\eqref{eq:twisttrans}) which is the same object used when studied in the context of skew products or flows (see, for example, \cite{oli1203}), with exactly the same estimates being required.

In section Section~\ref{sec:trans} the key notion of  transversality is discussed and a certain  estimate is shown to hold in the case when $\tau$ is not cohomologous to a piecewise constant. Section~\ref{sec:preparing} concerns the estimate of the norm of twisted transfer operators reducing the problem to a single key estimate (Proposition~\ref{prop:mainlem}). This key estimate is proven in Section~\ref{sec:mainestimate}, crucially using the transversality estimate from Section~\ref{sec:trans}. Finally, in Section~\ref{sec:rateofmixing}, the estimate on the twisted transfer operators is used to produce an estimate of exponential mixing.

\section{Transversality}
\label{sec:trans}
From this point onwards we will assume that $\tilde\lambda: = \inf f' > 2$. In general it would suffice to assume that there exists $n\in \bN$ such that $\inf \, (f^{n})' >1$. In that case we would simply consider a sufficiently large  iterate $m$ such that $\inf \, (f^{m})' > 2$  and proceed as now.
  Let $\Lambda := \sup \abs{f'} \geq \tilde\lambda$.  
We may assume that $ \sup \abs{\tau'} >0$ since if this does not hold then $\tau$ is  actually equal to a piecewise constant function, and in particular cohomologous to a piecewise constant function.
The first step is to define a forward invariant unstable conefield.
Let 
\[
\ctaup := \frac{ 2 \sup \abs{\tau'}}{\tilde \lambda - 1 } >0.
\]
 Define the constant conefield with the cones $\cK = \{ \bigl( \begin{smallmatrix}  \alpha\\ \beta \end{smallmatrix}  \bigr): \abs{ \smash{\frac{\beta}{\alpha}}} \leq \ctaup\}$. This conefield is strictly invariant  under 
 \[
 DF({x}) = \bigl(\begin{smallmatrix}
f'(x) & 0\\ \tau'(x) & 1
\end{smallmatrix} \bigr).
\]    
To see the invariance note that
$DF(x): \bigl( \begin{smallmatrix}  \alpha\\ \beta \end{smallmatrix}  \bigr) \mapsto \bigl( \begin{smallmatrix}  \alpha'\\ \beta' \end{smallmatrix}  \bigr) $ where ${\smash{\frac{\beta'}{\alpha'}}} = ( {\tau'}(x) + {\smash{\frac{\beta}{\alpha}}} )/f'(x)$.      
Let $x_{1}, x_{2} \in \base$ be two preimages of some $y\in \bT^{1}$, i.e., $f^{n}(x_{1}) = f^{n}(x_{2}) =y$. We write $x_{1} \pitchfork x_{2}$ (meaning \emph{transversal}) if $DF^{n}_{x_{1}} \cK \cap DF^{n}_{x_{2}} \cK = \{0\}$. Note that this transversality depends on $n$ even though the dependence is suppressed in the notation.
For future convenience let $J_{n} := {\abs{(f^{n})'}}^{-1}$.
Define the quantity
\[
\varphi(n)
:=
\sup_{y \in \base} \  \ \sup_{x_{1}\in f^{-n}(y)}  \sum_{ \substack{x_{2} \in f^{-n}(y) \\ x_{1} \not\pitchfork x_{2}} } J_{n}(x_{2}).
\]
This crucial quantity gives control on the fraction of preimages which are not transversal. %Note that $\varphi(n) \leq 1$.
In  this section we prove the following which is an extension of Tsujii~{\cite[Theorem~1.4]{Ts}} to the present situation where discontinuities are permitted.
Much of the argument follows the reasoning of the above mentioned reference with some changes due to the more general setting.
\begin{prop}
\label{prop:transversality}
Let $F:(x,u) \mapsto (f(x), u+\tau(x))$ be a piecewise-$\cC^{2}$ skew product over an expanding base map as above. 
\textbf{Either:}
\begin{equation}
\label{eq:transversality}
\limsup_{n\to \infty} \varphi(n)^{\frac{1}{n}}
<1,
\end{equation}
\textbf{Or:}
There exists Lipschitz $\theta : \base \to \bR$ and piecewise constant $\chi: \base \to \bR$ such that
 $\tau = \theta \circ f - \theta + \chi$. Moreover the discontinuities of $\chi$ only occur at points where either $f$ or $\tau$ are discontinuous.
\end{prop}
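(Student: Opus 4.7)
The plan is to reformulate the dichotomy in slope coordinates and recognise that the second alternative amounts to the existence of an invariant slope field for the cone dynamics. Parametrising vectors in $\cK$ by their slope $r = \beta/\alpha \in [-\ctaup,\ctaup]$, the projectivised differential acts by $r \mapsto (\tau'(x)+r)/f'(x)$; iterating gives that the slope at $f^n(x)$ issuing from $r_0$ at $x$ equals $(f^n)'(x)^{-1}(r_0 + S_n(x))$, where $S_n(x) := \sum_{i=0}^{n-1}(f^i)'(x)\,\tau'(f^i(x))$. Hence for $x_1,x_2 \in f^{-n}(y)$ the condition $x_1 \not\pitchfork x_2$ is, up to the shrinking cone width $2\ctaup J_n(x_j)$, the approximate equality of the central slopes $(f^n)'(x_j)^{-1}S_n(x_j)$. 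Moreover, an essentially bounded function $\psi:\base \to \bR$ satisfying the cohomological slope equation $f'(x)\psi(f(x)) = \tau'(x)+\psi(x)$ on each smoothness interval already yields the second alternative: a Lipschitz primitive $\theta$ of $\psi$ satisfies $(\theta\circ f - \theta)' = \tau'$ piecewise, whence $\chi := \tau - \theta\circ f + \theta$ is locally constant with jumps restricted to the singular set of $f$ and $\tau$. The proof therefore reduces to producing such a $\psi$ whenever~\eqref{eq:transversality} fails.

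Next I would verify a submultiplicativity estimate for $\varphi$. Cone invariance forces $x_1 \not\pitchfork_{m+n} x_2 \Rightarrow f^m(x_1) \not\pitchfork_n f^m(x_2)$; combining this with $J_{m+n}(x) = J_m(x)\,J_n(f^m(x))$ and with the bounded-distortion and transfer-operator bounds that hold piecewise under uniform expansion and covering, one should produce an inequality of the form $\varphi(m+n) \le C\,\varphi(m)\,\varphi(n)$. Fekete's lemma then gives existence of $\gamma_* := \lim_n \varphi(n)^{1/n}$, and the first alternative holds when $\gamma_*<1$.

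The main step is to show that $\gamma_*=1$ forces the cohomological equation. For each large $n$ take $y_n \in \base$ and $x_1^{(n)} \in f^{-n}(y_n)$ that nearly saturate $\varphi(n)$, so that non-transversal preimages of $y_n$ carry almost all the Jacobian mass of $f^{-n}(y_n)$. Consider the weighted slope measures
\[
\nu_{n,y} \;:=\; \sum_{x \in f^{-n}(y)} J_n(x)\,\delta_{(f^n)'(x)^{-1}S_n(x)} \quad \text{on }[-\ctaup,\ctaup];
\]
the reformulation from paragraph one says that a definite fraction of the mass of $\nu_{n,y_n}$ is concentrated in a window of width $O(\tilde\lambda^{-n})$. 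The family $\{\nu_{n,y}\}$ is stochastically self-similar under $f$ (pulling back by one step of $f$ is an affine rescaling of the support), and a diagonal extraction over a countable dense subset of base points, combined with the covering property of $f$ to propagate concentration to every $y$, yields in the limit a measurable $\psi:\base \to [-\ctaup,\ctaup]$ that inherits the slope equation almost everywhere from the self-similarity of the $\nu_{n,y}$. The contractive recursion $r \mapsto (\tau'(x)+r)/f'(x)$ and the piecewise $\cC^{1}$ regularity of $\tau'$ and $f'$ then upgrade $\psi$ to be Lipschitz on each smoothness interval; its primitive $\theta$ is Lipschitz on $\base$, and $\chi := \tau - \theta\circ f + \theta$ has the required piecewise constant form with discontinuities confined to those of $f$ and $\tau$.

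The principal obstacle is precisely the extraction step above: converting statistical non-decay of $\varphi$ into a pointwise well-defined invariant slope field, while keeping bounded-distortion constants uniformly controlled across the rapidly proliferating discontinuities of $f^n$. Tsujii's argument in \cite{Ts} handles the smooth case; adapting it to the present piecewise setting, as forewarned by the authors, is the technical core of the section.
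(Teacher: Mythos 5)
Your outline correctly identifies the end goal (an invariant slope field $\psi$ satisfying $f'\cdot\psi\circ f = \tau'+\psi$, whose Lipschitz primitive realizes the second alternative), and the first paragraph matches the paper's reduction. However, there is a genuine gap at the submultiplicativity step, and this is precisely where the paper's key idea lives.

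You assert that $x_1 \not\pitchfork_{m+n} x_2 \Rightarrow f^m(x_1) \not\pitchfork_n f^m(x_2)$ (correct) and that combining this with $J_{m+n}(x)=J_m(x)J_n(f^m x)$ and bounded distortion ``should produce'' $\varphi(m+n)\le C\,\varphi(m)\varphi(n)$. It does not. If one groups the $(m+n)$-preimages $x_2$ by $z_2 := f^m(x_2)\in f^{-n}(y)$, the outer sum over $z_2$ with $z_1\not\pitchfork_n z_2$ is indeed controlled by $\varphi(n)$, but the \emph{inner} sum is over $x_2\in f^{-m}(z_2)$ with $x_1\not\pitchfork_{m+n}x_2$, where $x_1$ is an $m$-preimage of a \emph{different} point $z_1\neq z_2$. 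This is not a transversality condition between $m$-preimages of a common point, so it is not dominated by $\varphi(m)$; the naive bound only gives $\varphi(m+n)\le\varphi(n)\cdot\sup_z\sum_{x\in f^{-m}(z)}J_m(x)$, which loses the second factor entirely. And even if a bound $\varphi(m+n)\le C\varphi(m)\varphi(n)$ did hold with $C>1$, Fekete would give that the limit exists, but a limit equal to $1$ would tell you nothing pointwise about $\varphi(n)$ itself, so the subsequent extraction of an exact invariant slope field would still be obstructed.

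The paper's resolution is exactly to abandon the pairwise non-transversality count and replace it by a line-containment count: for $L\in\bR\bP^1$, sum $J_n(x)\,h_\nu(x)/h_\nu(y)$ over those $x\in f^{-n}(y)$ with $DF^n(x)\cK\supset L$, and take the sup over $L$ and $y$ to define $\tilde\varphi(n)$. Line containment factors cleanly across composition (with $L$ pulled back by $DF^n(z)$), and weighting by the invariant-density ratio makes $\tilde\varphi$ not just submultiplicative but \emph{bounded by $1$}. The comparison $\varphi(n)\lesssim\tilde\varphi(m(n))$ for a suitable $m(n)\sim cn$ is a separate lemma. With both properties, $\limsup\varphi(n)^{1/n}=1$ forces $\tilde\varphi(n)\equiv 1$, which gives an \emph{exact} common line in every preimage cone for every $n$ and $y$; the covering property then propagates this to all base points, and the shrinking nested cones single out a unique invariant line field pointwise, with bounded variation coming from an explicit geometric series. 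Your weak-$*$ extraction of approximately concentrated slope measures is qualitatively in the right direction but cannot be made rigorous without some such exact, deterministic dichotomy: it would leave you with a slope field that is only approximately invariant on a set of large measure, and closing that gap amounts to reproducing the $\tilde\varphi\equiv 1$ argument. You should also note that the invariant slope $\ell$ is only of bounded variation, not Lipschitz on smoothness intervals, because the partition of smoothness for the series representation refines indefinitely; boundedness of $\ell$ is what is used to make the primitive $\theta$ Lipschitz.
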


Before proving the above, let us record a consequence of the transversality. 
Let $\tau_{n} := \sum_{j=0}^{n-1} \tau \circ f^{j}$. 

\begin{lem}\label{lem:transverse}
If $f^{n}(x_{1}) = f^{n}(x_{2})$ and $x_{1} \pitchfork x_{2}$ 
then
\[
\abs{({\tau_{n}'} \cdot {J_{n}})(x_{1}) -({\tau_{n}'} \cdot {J_{n}})(x_{2})}
 >  {\ctaup} ( J_{n}(x_{1})+ J_{n}(x_{2}) ).
\]
\end{lem}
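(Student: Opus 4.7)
\smallskip
\noindent\textbf{Proof plan.}
The whole content of the lemma is a direct translation of the cone-disjointness assumption $DF^{n}_{x_{1}}\cK\cap DF^{n}_{x_{2}}\cK=\{0\}$ into an inequality on slopes, once one knows the shape of the derivative matrix $DF^{n}$. The first step is to compute $DF^{n}$ explicitly. Since $F$ is a skew product, a one-line chain-rule induction (or an immediate matrix multiplication using the lower-triangular form of each $DF_{f^{j}(x)}$) shows
\[
DF^{n}_{x}=\begin{pmatrix}(f^{n})'(x) & 0\\ \tau_{n}'(x) & 1\end{pmatrix},
\]
the key point being that the chain rule produces exactly $\tau_{n}'(x)=\sum_{j=0}^{n-1}\tau'(f^{j}(x))(f^{j})'(x)$ in the lower-left slot.

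Given this formula, I apply $DF^{n}_{x}$ to a vector $\bigl(\begin{smallmatrix}\alpha\\ \beta\end{smallmatrix}\bigr)\in\cK$. The image has slope
\[
\frac{\tau_{n}'(x)\alpha+\beta}{(f^{n})'(x)\alpha}=J_{n}(x)\tau_{n}'(x)+J_{n}(x)\tfrac{\beta}{\alpha}.
\]
As $\tfrac{\beta}{\alpha}$ ranges over $[-\ctaup,\ctaup]$, this traces out the closed interval
\[
I(x):=\bigl[J_{n}(x)\tau_{n}'(x)-\ctaup J_{n}(x),\ J_{n}(x)\tau_{n}'(x)+\ctaup J_{n}(x)\bigr],
\]
so the image cone $DF^{n}_{x}\cK$ is exactly the set of vectors whose slope lies in $I(x)$ (together with the zero vector).

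The hypothesis $x_{1}\pitchfork x_{2}$ then says that $I(x_{1})$ and $I(x_{2})$ are disjoint closed intervals in $\bR$, which (by reordering if necessary) yields
\[
J_{n}(x_{2})\tau_{n}'(x_{2})-\ctaup J_{n}(x_{2})>J_{n}(x_{1})\tau_{n}'(x_{1})+\ctaup J_{n}(x_{1}),
\]
and rearranging gives precisely the asserted inequality with absolute value on the left. There is no real obstacle in this argument; the only point that needs a moment's care is checking that the cone intersection condition is genuinely equivalent to disjointness of the slope intervals (one must rule out vertical rays, but these do not appear in the image of $\cK$ since $(f^{n})'(x)\neq 0$).
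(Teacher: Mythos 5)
Your proof is correct and follows essentially the same route as the paper: both compute the slopes of the edge vectors of the image cones $DF^{n}_{x_{i}}\cK$ using the lower-triangular form $DF^{n}_{x}=\bigl(\begin{smallmatrix}(f^{n})'(x) & 0\\ \tau_{n}'(x) & 1\end{smallmatrix}\bigr)$ and then translate the cone-disjointness hypothesis into disjointness of slope intervals. Your write-up is somewhat more explicit (spelling out the chain-rule formula for $DF^{n}$, the slope map, and the interval $I(x)$) while the paper's proof is terser, picking a WLOG orientation and writing the single inequality directly; but the content is identical, and your closing remark about ruling out vertical rays, justified by $(f^{n})'>0$ here, is the only point of care and you handle it correctly.
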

\begin{proof}
Assume that $\frac{\tau_{n}'}{(f^{n})'}(x_{1}) \geq  \frac{\tau_{n}'}{(f^{n})'}(x_{2})$, the other case being identical. 
Note that 
\[
DF^{n}({x_{1}})\bigl(\begin{smallmatrix}1\\ -\ctaup\end{smallmatrix}\bigr) = \left(\begin{smallmatrix}(f^{n})'(x_{1})\\ \tau_{n}'(x_{1}) -\ctaup\end{smallmatrix}\right),
\quad 
DF^{n}({x_{2}})\bigl(\begin{smallmatrix}1\\ \ctaup\end{smallmatrix}\bigr) = \left(\begin{smallmatrix}(f^{n})'(x_{2})\\ \tau_{n}'(x_{2}) + \ctaup\end{smallmatrix}\right).
\]
Transversality implies that 
$({\tau_{n}'(x_{1}) -\ctaup })/{( f^{n})'(x_{1})  }  > ({\tau_{n}'(x_{2}) + \ctaup}) / {( f^{n})'(x_{2})  }$.
\end{proof}

The remainder of this section is devoted to the proof of Proposition~\ref{prop:transversality}.
As mentioned in the introduction it is known that there exists an $f$-invariant probability measure $\nu$ which is absolutely continuous with respect to Lebesgue. 
Let $h_{\nu}$ denote the density of $\nu$. 
It is convenient to introduce the quantity
\begin{equation}
\label{eq:defphitilde}
\tilde\varphi(n,L,y)
:=
 \sum_{ \substack{x \in f^{-n}(y) \\ DF^{n}(x)\cK \supset L} } J_{n}(x) \cdot \frac{h_{\nu}(x)}{h_{\nu}(y)}
\end{equation}
where $L \in \bR\bP^{1}$ (an element of real projective space, i.e., a line in $\bR^{2}$ which passes through the origin).
Let $\tilde\varphi(n) :=  \sup_{y} \sup_{L} \tilde\varphi(n,L,y)$.
The benefit of this definition is that $\tilde\varphi(n) $ is submultiplicative,
i.e., $\tilde\varphi(n+m) \leq \tilde\varphi(n) \tilde\varphi(m)$ for all $n,m\in \bN$;
and $\tilde\varphi(n) \leq 1$ for all $n\in \bN$.

\begin{lem}
\label{lem:allequivalent}
The following statements are equivalent.
\begin{enumerate}[label={(\roman*)}, font=\normalfont]
\item
$\displaystyle\limsup_{n\to\infty} \varphi(n)^{\frac{1}{n}} =1$
\item
$\displaystyle\lim_{n\to\infty} \tilde\varphi(n)^{\frac{1}{n}} =1$,
\item
For all $n\in \bN$, $y\in \base$ there exists $L_{n}(y) \in \bR\bP^{1}$ such that, 
for every  $x \in f^{-n}(y)$,
$DF^{n}(x) \cK \supset L_{n}(y)$,
\item
There exists   a measurable $F$-invariant unstable direction, 
i.e., there exists $\ell: \base \to \bR$ such that  
$\tau' = f' \cdot \ell \circ f - \ell$
and so
\[
DF(x) \left(\begin{smallmatrix}  1 \\ \ell(x)  \end{smallmatrix}\right)
= f'(x) \left(\begin{smallmatrix}  1 \\ \ell\circ f(x)  \end{smallmatrix}\right).
\]
\item
Statement \textnormal{(iv)} holds with $\ell$  of bounded variation.
\item
%$\tau$ is cohomologous to a piecewise constant function, i.e., 
There exists $ \theta : \base \to \base$ such that $\tau  - \theta\circ f + \theta$ is piecewise constant (discontinuities only where either $f$ or $\tau$ are discontinuous). Moreover $\theta$ is  differentiable with derivative of bounded variation.
\end{enumerate}
\end{lem}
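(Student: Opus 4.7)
The plan is to close the cycle (i)$\Leftrightarrow$(ii)$\Leftrightarrow$(iii)$\Leftrightarrow$(iv), handle the routine equivalences (v)$\Leftrightarrow$(vi) and (v)$\Rightarrow$(iv), and finally carry out the regularity boost (iv)$\Rightarrow$(v), which I expect to be the main obstacle.

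For (i)$\Leftrightarrow$(ii)$\Leftrightarrow$(iii) I would use three ingredients: that $h_\nu$ is bounded above and below (from covering, so $J_n(x) h_\nu(x)/h_\nu(y) \asymp J_n(x)$), that $x_1 \not\pitchfork x_2$ iff their cones share a common line (with $DF^n(x)\cK$ of angular width $\lesssim \tilde\lambda^{-n}$), and the submultiplicativity $\tilde\varphi(n+m) \leq \tilde\varphi(n)\tilde\varphi(m)$ already noted, together with $\tilde\varphi(n) \leq 1$ from $\mathcal{L}^n h_\nu = h_\nu$. Immediate is $\tilde\varphi(n)\lesssim\varphi(n)$ (since preimages contributing to $\tilde\varphi(n,L,y)$ are pairwise non-transversal); a finite covering of $DF^n(x_1)\cK$ by $O((\Lambda/\tilde\lambda)^n)$ lines yields the reverse, giving (i)$\Leftrightarrow$(ii). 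Fekete's lemma on the submultiplicative sequence $\tilde\varphi(n) \leq 1$ then shows (ii) is equivalent to $\tilde\varphi(n) = 1$ for every $n$. To upgrade this to (iii) at every $y$ I would argue by contradiction using mixing of $f$: if $\sup_L\tilde\varphi(n_0,L,y_0) = 1-\varepsilon$ on some neighborhood $U\ni y_0$, the submultiplicativity decomposition $\tilde\varphi(n_0+m, L, y) \leq \sum_z J_m(z)\tfrac{h_\nu(z)}{h_\nu(y)}\tilde\varphi(n_0, \cdot, z)$ combined with $\mathcal{L}^m(\mathbf{1}_U h_\nu)(y) \to h_\nu(y)\nu(U)$ as $m \to \infty$ yields $\tilde\varphi(n_0+m) < 1$ for large $m$, a contradiction. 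The sup is attained since $L\mapsto\tilde\varphi(n,L,y)$ is a step function.

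For (iii)$\Leftrightarrow$(iv): since $DF^n(x)\cK$ has angular width $O(\tilde\lambda^{-n})\to 0$, writing $L_n(y)=\bigl(\begin{smallmatrix}1\\ \ell_n(y)\end{smallmatrix}\bigr)$ gives $\ell_n(y)\to\ell(y)\in[-\ctaup,\ctaup]$ independent of preimage choice, and a direct $DF$-calculation yields $\tau' = f'\cdot\ell\circ f - \ell$; the converse is immediate by taking $L_n(y)=\bigl(\begin{smallmatrix}1\\ \ell(y)\end{smallmatrix}\bigr)$. The equivalence (v)$\Leftrightarrow$(vi) is routine calculus (with $\theta$ an antiderivative of $\ell$, which is Lipschitz by boundedness of $\ell$), and (v)$\Rightarrow$(iv) is trivial.

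The main obstacle is the regularity boost (iv)$\Rightarrow$(v). My plan is to introduce the averaged transfer-type operator $T\phi(y) := \frac{1}{h_\nu(y)}\sum_{x\in f^{-1}(y)} \frac{h_\nu(x)}{f'(x)^2}(\tau'(x) + \phi(x))$; thanks to $\mathcal{L} h_\nu = h_\nu$ and the invariance $\ell(y) = (\tau'(x) + \ell(x))/f'(x)$ one checks directly that $T\ell = \ell$, and $T$ is a $\tilde\lambda^{-1}$-contraction on $L^\infty$ with unique $L^\infty$ fixed point $\ell$. A Lasota--Yorke-type inequality $\operatorname{var}(T\phi) \leq \tilde\lambda^{-1}\operatorname{var}(\phi) + C$ (using $\tilde\lambda>1$ and the piecewise-$\cC^2$ regularity of $\tau$ and $f$, with careful bookkeeping of boundary jumps introduced at the finite discontinuity set) then shows $\sup_n \operatorname{var}(T^n\phi_0)<\infty$ for any BV initial datum $\phi_0$; since $T^n\phi_0\to\ell$ in $L^\infty$, lower semicontinuity of variation gives $\ell\in\text{BV}$. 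Finally, given BV $\ell$, set $\theta(x):=\int_0^x \ell$ so $\theta$ is Lipschitz; on smooth pieces $(\tau-\theta\circ f+\theta)'=0$, forcing $\chi:=\tau-\theta\circ f+\theta$ to be piecewise constant, and the invariance confines discontinuities of $\ell$ to preimages of the discontinuities of $\tau$ or $f$, giving the stated restriction on $\chi$.
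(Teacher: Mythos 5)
There is a genuine gap in your implication (i)$\Longrightarrow$(ii). The single-scale covering argument gives at best $\varphi(n)\leq C(\Lambda/\tilde\lambda)^n\,\tilde\varphi(n)$, hence only $\limsup_n\varphi(n)^{1/n}\leq(\Lambda/\tilde\lambda)\lim_n\tilde\varphi(n)^{1/n}$. If $\lim_n\tilde\varphi(n)^{1/n}$ lies in $[\tilde\lambda/\Lambda,1)$ this inequality is vacuous, so you cannot conclude $\limsup\varphi(n)^{1/n}<1$ and the contrapositive of (i)$\Rightarrow$(ii) does not follow. The paper avoids the exponential loss by comparing at \emph{two different time scales}: setting $n=\lceil 2m\ln\Lambda/\ln\tilde\lambda\rceil$ and $p=n-m$, one shows that if $DF^n(x_1)\cK\cap DF^n(x_2)\cK\neq\{0\}$ then the wider cone $DF^m(f^p x_2)\cK$ already contains the fixed line $L(x_1)=DF^n(x_1)(\bR\times\{0\})$, because the angular gap between $DF^n(x_2)\cK$ and $DF^m(f^px_2)\cK$ is at least $\tfrac12\ctaup\Lambda^{-m}\geq\tfrac12\ctaup\tilde\lambda^{-n/2}$ (via Lemma~\ref{lem:tauprime}), which dominates the width $\leq\ctaup\tilde\lambda^{-n}$ of $DF^n(x_1)\cK$ for large $n$. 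This yields $\varphi(n)\leq\ceta\,\tilde\varphi(m(n))$ with $m(n)\asymp n$ and \emph{no} exponential factor; exponential decay of $\tilde\varphi$ then transfers to $\varphi$, giving the needed contrapositive. You should replace the one-scale covering by this two-scale comparison.

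The rest of your proposal is sound but diverges from the paper at two points worth flagging. For (ii)$\Rightarrow$(iii) the paper does not invoke mixing: openness of cone transversality yields a transversal pair of $n_0$-preimages over a whole interval $\omega_*$, and then the covering hypothesis ($f^{m_0}\omega_*=\base$) propagates a transversal pair of $(n_0+m_0)$-preimages to \emph{every} $y\in\base$, contradicting $\tilde\varphi(n_0+m_0)=1$; this is cleaner than an asymptotic mixing argument and sidesteps the upper-semicontinuity-in-$y$ step you gloss over when extending $1-\varepsilon$ to a neighbourhood of $y_0$ (bear in mind $h_\nu$ is only $\mathbf{BV}$). For (iv)$\Rightarrow$(v) the paper does not set up a fixed-point/Lasota--Yorke scheme: it fixes a single right inverse $g$ of $f$, iterates the invariance relation to get $\ell=\sum_{j\geq 0}(\tau'/(f^j)')\circ g^j$ (plus an $\mathbf{L}^\infty$-vanishing remainder), and observes the series converges in $\mathbf{BV}$ because each term is piecewise $\cC^{1}$ with $\mathbf{BV}$-norm decaying exponentially. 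Your averaging-operator route should also work, but you should double-check the variation contraction factor for $T$ (it will pick up the usual boundary-term factor and be of order $2/\tilde\lambda$ rather than $\tilde\lambda^{-1}$) and that the $1/h_\nu$ weight, being only $\mathbf{BV}$, does not spoil the inequality; the paper's series argument is more elementary.
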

Since $\limsup_{n\to\infty} \varphi(n)^{\frac{1}{n}} \leq 1$
the above lemma immediately implies Proposition~\ref{prop:transversality}. 
In the remainder of this section we prove the above lemma.
First a simple fact that we will use repeatedly.

\begin{lem}\label{lem:tauprime}
$\abs{J_{n} \cdot \tau_{n}'} \leq \frac{1}{2} \ctaup$.
\end{lem}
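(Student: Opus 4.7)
The plan is a direct computation using the chain rule and the uniform expansion hypothesis $\inf f' \geq \tilde\lambda > 2$. First I apply the chain rule to $\tau_{n} = \sum_{j=0}^{n-1} \tau \circ f^{j}$ to obtain
\[
\tau_{n}'(x) = \sum_{j=0}^{n-1} (\tau' \circ f^{j})(x) \cdot (f^{j})'(x),
\]
which is valid at every point where all the relevant derivatives exist (i.e., away from preimages of the discontinuities). Multiplying by $J_{n}(x) = 1/(f^{n})'(x)$ and using the factorization $(f^{n})'(x) = (f^{n-j})'(f^{j}x) \cdot (f^{j})'(x)$, the $j$-th term reduces to $(\tau'\circ f^{j})(x) / (f^{n-j})'(f^{j}x)$.

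Taking absolute values and invoking $|(f^{n-j})'| \geq \tilde\lambda^{n-j}$, each term is bounded by $\sup|\tau'| \cdot \tilde\lambda^{-(n-j)}$. Summing the resulting geometric series,
\[
\sum_{j=0}^{n-1} \tilde\lambda^{-(n-j)} = \sum_{k=1}^{n} \tilde\lambda^{-k} \leq \frac{1}{\tilde\lambda-1},
\]
we get $|J_{n} \cdot \tau_{n}'| \leq \frac{\sup|\tau'|}{\tilde\lambda-1}$. Comparing with the definition $\ctaup = 2\sup|\tau'|/(\tilde\lambda-1)$ from the start of the section yields exactly $|J_{n} \cdot \tau_{n}'| \leq \tfrac{1}{2}\ctaup$.

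There is essentially no obstacle here: the estimate is a one-line geometric sum. The only points requiring minor care are (i) keeping track of the argument at which each derivative is evaluated under the chain rule, and (ii) noting that the assumption $\tilde\lambda > 2$ (rather than merely $\tilde\lambda > 1$) is what makes the sum of the geometric tail at most $1/(\tilde\lambda-1)$ match the constant $\ctaup/2$; this is precisely why that assumption was imposed at the start of the section and is also what guarantees the conefield $\cK$ defined via $\ctaup$ is strictly $DF$-invariant.
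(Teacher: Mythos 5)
Your proof is correct and takes the same route as the paper: chain rule for $\tau_n'$, the factorization $(f^n)'(x)=(f^{n-j})'(f^jx)\cdot(f^j)'(x)$, and a geometric series bounded by $(\tilde\lambda-1)^{-1}$ (indeed, your careful index-tracking fixes a small off-by-one slip in the paper's intermediate display, where the sum should run over $\tilde\lambda^{-(n-i)}$, i.e.\ $\sum_{k=1}^{n}\tilde\lambda^{-k}$). One small caveat on your aside (ii): the bound $\sum_{k\ge1}\tilde\lambda^{-k}\le(\tilde\lambda-1)^{-1}$ holds for any $\tilde\lambda>1$, and the factor $\tfrac12$ appears simply because $\ctaup$ is \emph{defined} as $2\sup|\tau'|/(\tilde\lambda-1)$ (the extra $2$ is there to give slack for strict cone invariance); the stronger assumption $\tilde\lambda>2$ is not used in this lemma but rather elsewhere, e.g.\ to ensure $2\|J\|_{\infty}<1$ in the Lasota--Yorke estimate.
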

\begin{proof}
First observe that 
$\tau_{n}' = \sum_{i=0}^{n-1} \tau' \circ f^{i} \cdot (f^{i})'$.
 Consequently $\abs{J_{n} \cdot \tau_{n}'} \leq \abs{\tau'} \sum_{i=0}^{n-1} \tilde\lambda^{-i}$.
For all $n\in \bN$ the sum $\sum_{i=0}^{n-1} \tilde\lambda^{-i}$ is bounded from above by $(\tilde\lambda - 1)^{-1}$.
And so, using also  the definition of $\ctaup$,  
we know that $\abs{J_{n} \cdot \tau_{n}'} \leq  {\sup  \abs{\tau'} }/ ({\tilde\lambda - 1})   =  \frac{1}{2} \ctaup$.
\end{proof}

\begin{proof}[\bfseries Proof of  (i) $\Longrightarrow$ (ii)]
Suppose that $m \in \bN$, $n = n(m) = \lceil 2 \frac{  \ln \Lambda }{ \ln \tilde\lambda} m  \rceil$, 
$y\in \base$ and $x_{1}, x_{2} \in f^{-n}(y)$. 
Note that $n > m$ since $\Lambda \geq \tilde\lambda$.
Let $p=n-m$.
Further suppose that 
\[
DF^{n}(x_{1})\cK \cap DF^{n}(x_{2})\cK \neq \{0\}.
\]
The slopes of the edges of $DF^{n}(x_{1})\cK$ are
$\frac{\tau_{n}'}{(f^{n})'}(x_{1}) \pm \ctaup J_{n}(x_{1})$.
Let 
\[
L(x_{1}):= DF^{n}(x_{1})( \bR \times \{0\}).
\]
 The slope of $L$ is $\frac{\tau_{n}'}{(f^{n})'}(x_{1})$.
Since we assume the cones $DF^{n}(x_{1})\cK$ and $ DF^{n}(x_{2})\cK$ are not transversal this implies that
the difference in slope between one of the edges of $DF^{n}(x_{2})\cK$ and $L$ is not greater than 
\begin{equation}
\label{eq:rain}
 \ctaup J_{n}(x_{1}) \leq \ctaup \tilde\lambda^{-n}.
 \end{equation}
 Now consider the  cone $DF^{n}(x_{2})\cK$ and the  cone $DF^{m}(f^{p}x_{2})\cK \supset DF^{n}(x_{2}) \cK$.
The slopes of the edges of the first are
\[
\frac{\tau_{n}'}{(f^{n})'}(x_{2}) \pm \ctaup J_{n}(x_{2}) 
=  \frac{\tau_{m}'}{(f^{m})'}\circ f^{p}(x_{2}) + \frac{\tau_{p}'}{(f^{m})'\circ f^{p} \cdot (f^{p})'}(x_{2})  \pm \ctaup J_{n}(x_{2}),
\]
whilst the slopes of the edges of the second are
\[
\frac{\tau_{m}'}{(f^{m})'}\circ f^{p}(x_{2}) \pm \ctaup J_{m}\circ f^{p}(x_{2}).
\]
Consequently the slopes of the edges of the two cones are separated by at least
\[
J_{m}\circ f^{p}(x_{2}) \left(  \ctaup   -  \sup \abs{\tau_{p}' \cdot J_{p}}    \right) - \ctaup J_{n}(x_{2}).
\]
By Lemma~\ref{lem:tauprime} we know that  $\abs{\tau_{p}' \cdot J_{p}}  \leq \frac{1}{2} \ctaup$.
This means that the above term is bounded from below by
\[
\tfrac{1}{2} \ctaup \Lambda^{-m}  -  \ctaup \tilde\lambda^{-n}
\geq \tfrac{1}{2} \ctaup  \tilde\lambda^{-\frac{n}{2}}  -  \ctaup \tilde\lambda^{-n},
\]
where we used that the assumed relation between $n$ and $m$ implies that $m \leq \frac{n}{2} \frac{\ln \lambda}{\ln \Lambda}$ and so $\Lambda^{-m} \geq \tilde\lambda^{-\frac{n}{2}}$.
Recall now~\eqref{eq:rain}. For all $n$ sufficiently large then
$ \tfrac{1}{2} \ctaup  \tilde\lambda^{-\frac{n}{2}}  -  \ctaup \tilde\lambda^{-n}
\geq \ctaup \tilde\lambda^{-n}$.
To conclude, we have shown that
$DF^{n}(x_{1})\cK \cap DF^{n}(x_{2})\cK \neq \{0\}$
implies that
$DF^{m}(f^{p}x_{2})\cK \supset L(x_{1})$ where $L(x_{1})$ is defined as before.
This means that 
\[
\sum_{ \substack{x_{2} \in f^{-n}(y) \\ x_{1} \not\pitchfork x_{2}} } J_{n}(x_{2})
\leq
 \sup_{L} \sum_{ \substack{x \in f^{-m}(y) \\ DF^{m}(x)\cK \supset L} } J_{m}(x) . 
\]
Finally this implies that $\varphi(n) \leq \ceta \tilde \varphi(m(n))$, where $\ceta:= {\sup h_{\nu}}/{\inf h_{\nu}} >0$.
\end{proof}

\begin{proof}[\bfseries Proof of  (ii) $\Longrightarrow$ (iii)]
By submultiplicativity  and the fact that $\tilde \varphi(n) \leq 1$ for all $n\in \bN$ the assumption $\lim_{n\to\infty} \tilde\varphi(n)^{\frac{1}{n}} =1$ 
implies that $ \tilde\varphi(n) =1$ for all $n\in \bN$.
Consequently the following statement holds:
\begin{quotation}
For all $n$ there exists $y_{n}\in \base$ and  $L_{n} \subset \bR\bP^{1}$
such that,  for all $x\in f^{-n}(y_{n})$, 
$DF^{n}(x)\cK \supset L_{n}$. %\arabic{equation}
\end{quotation}
It remains to prove that this above statement implies statement~(iii).
We will prove the contrapositive. Suppose the negation of statement~(iii). I.e., 
there exists $n_{0}\in \bN$, $y\in \base$, $x_{1}, x_{2} \in f^{-n_{0}}(y)$ such that $DF^{n_{0}}(x_{1}) \cK  \cap DF^{n_{0}}(x_{2}) \cK = \{0\} $.
Let  $g_{1}, g_{2}$ denote the two inverse maps corresponding to $x_{1},x_{2}$. These inverses are defined on some interval containing $y$ and due to the openness of the transversality of cones we can assume that 
$DF^{n_{0}}(g_{1}(y)) \cK  \cap DF^{n_{0}}(g_{2}(y)) \cK = \{0\} $ for all $y \in \omega_{*}$ where $\omega_{*} \subset \base$ is an open interval.
Since $f$ is covering   there exists $m_{0}\in \bN$ such that 
$ f^{m_{0}}(\omega_{*})
=\base$.
Let $m= m_{0}+n_{0}$.
For all $y\in \base$ there exists $z\in f^{-m_{0}}(y)$ and there exists  $x_{1}, x_{2} \in f^{-n_{0}}(z)$ with the above transversality property.
This means that for all $y\in \base$ there exist $x_{1}, x_{2} \in f^{-m}(y)$ such that
$DF^{m}(x_{1}) \cK  \cap DF^{m}(x_{2}) \cK = \{0\} $, since
\[
DF^{m}(x_{1}) \cK  \cap DF^{m}(x_{2}) \cK 
= DF^{m_{0}}(y) ( DF^{n_{0}}(x_{1}) \cK  \cap DF^{n_{0}}(x_{2}) \cK). 
\]
This contradicts the above statement concerning the existence of some $L_{n}$ such that $DF^{n}(x)\cK \supset L_{n}$ for all $x\in f^{-n}(y)$.
\end{proof}

\begin{proof}[\bfseries Proof of  (iii) $\Longrightarrow$ (iv)]
For all $x\in \base$ let $\ell_{n}(x) $ denote the slope of $L_{n}(x)$. 
I.e., $\left(\begin{smallmatrix}
       1 \\ \ell_{n}(x)
      \end{smallmatrix}\right)
      \in L_{n}(x)$.
The uniform expansion means that the image of unstable cones contracts and consequently for each $x$ then $\ell_{n}(x) \to \ell(x)$ as $n\to \infty$.     
The function $\ell(x)$
enjoys the property that $\tau_{n}'(x) + \ell(x) = (f^{n})'(x) \cdot \ell(f^{n}x)$.
\end{proof}

\begin{proof}[\bfseries Proof of  (iv) $\Longleftrightarrow$ (v)]
The implication (v) $\Longrightarrow$ (iv) is immediate.
Assume that statement (iv) holds.
Since $\ell$ is invariant
we know that for any $n\in \bN$, $x \in f^{-n}(y)$ that
\[
 \ell(y) = \frac{ \tau_{n}'}{(f^{n})'} (x)+  \frac{\ell}{(f^{n})'} (x).
\]
For large $n$ the second term on the right hand side becomes very small. 
Note that, because we assume (iv) holds, if we want to calculate $\ell$ at $y$ it does not matter which preimage $x$ we consider. 
Fix some $\omega_{0} \subset \base$ a disjoint union of intervals and a bijection $g: \omega_{0} \to \base $ such that $f \circ g$ is the identity. We can do this in such a way that $g$ is $\cC^{2}$ on each  component of $\omega_{0}$\footnote{Note that if the map $f$ was full branch we could choose $g$ to be $\cC^2$ but this cannot be expected in general.}.
Of course $f^{n} \circ g^{n} = \mathbf{id}$ for all $n\in \bN$.
Consequently
\[
\begin{aligned}
  \ell & = \frac{\tau_{n}'}{(f^{n})'}\circ g^{n}  +  \frac{\ell}{(f^{n})'}\circ g^{n} 
\\
& = 
 \sum_{j=0}^{n-1}
 \frac{\tau'}{(f^{n-j})'}\circ g^{n-j}  
 +  \frac{\ell}{(f^{n})'}\circ g^{n}.
\end{aligned}
\]
Note that 
$\norm{\smash{\frac{\ell}{(f^{n})'}\circ g^{n}}}_{\L{\infty}{\base}} \to 0$ 
as $n\to\infty$.
Also note that $ \sum_{j=0}^{\infty}
 \frac{\tau'}{(f^{j})'}\circ g^{j}$ is of bounded variation. Indeed each term in this infinite sum is piecewise $\cC^{2}$ and  has only a finite number of discontinuities. 
Moreover the $\mathbf{BV}$ norm of the terms is exponentially decreasing due to the uniform expansion and so the sum converges in  $\mathbf{BV}$.
Consequently $\ell$ must be of bounded variation.
\end{proof}

\begin{proof}[\bfseries Proof of  (v) $\Longleftrightarrow$ (vi)]
First we prove  (v) $\Longrightarrow$ (vi).
For all $y\in \base$ let
\[
 \theta(y) := \int_{0}^{y} \ell(x) \ dx.
\]
This defines a Lipschitz function on $\base$, differentiable in the sense that the derivative is of bounded variation.
There exists a partition ${\{\omega_{m}\}}_{m}$ such that $\tau$ and $f$ are $\cC^{2}$ when restricted to each element of the partition.
Write $\omega_{m} = (a_{m}, b_{m})$.
If $y\in \omega_{m}$ then
$\tau(y) = \tau(a_{j}) + \int_{a_{j}}^{y} \tau'(x) \ dx$.
Substituting the equation $\tau' = f' \cdot \ell \circ f - \ell$ we obtain
\[
 \begin{aligned}
 \tau(y) 
 &= \tau(a_{j}) 
 +  \int_{a_{j}}^{y}  f' \cdot \ell \circ f(x) \ dx - \int_{a_{j}}^{y}  \ell(x)  \ dx\\
 &= \tau(a_{j}) 
 +  \int_{f(a_{j})}^{f(y)}   \ell (x) \ dx - \int_{a_{j}}^{y}  \ell(x)  \ dx \\
 &= \theta \circ f(y) - \theta(y) 
 +\left( \theta(a_{j})  - \theta \circ f(a_{j})
+  \tau(a_{j})\right)\\
& = \theta\circ f(y) -  \theta(y) + \chi_{j}.
 \end{aligned}
\]
Let $\chi$ denote the piecewise constant function equal to $\chi_{j}$ on each $\omega_{j}$.
The implication (vi) $\Longrightarrow$ (v) follows by differentiating $\tau - \theta\circ f + \theta = \chi$.
\end{proof}

\begin{proof}[\bfseries Proof of  (iv) $\Longrightarrow$ (i)]
The vector $\left(\begin{smallmatrix}
                   1 \\ \ell(y)
                  \end{smallmatrix} \right)$
                  is contained within $DF^{n}(x)\cK$ for all $x\in f^{-n}(y)$ since 
                  $\left(\begin{smallmatrix}
                   1 \\ \ell(x)
                  \end{smallmatrix} \right) \in \cK$
   and $DF^{n}(x) \left(\begin{smallmatrix}
                   1 \\ \ell(x)
                  \end{smallmatrix} \right) 
                  = (f^{n})'(x) 
 \left(\begin{smallmatrix}
                   1 \\ \ell(y)
                  \end{smallmatrix} \right)$.
       Consequently            
    $x_{1} \not\pitchfork x_{2}$, i.e.,   $   DF^{n}(x_{1})\cK \cap DF^{n}(x_{2})\cK \neq \{0\}$, for every $x_{1}, x_{2} \in f^{-n}(y)$.
\end{proof}

\section{Preparation for the Main Estimate}
\label{sec:preparing}

Throughout this section and the next we assume that the first alternative of Proposition~\ref{prop:transversality} holds.
Let $\climsup := \limsup_{n\to \infty} \varphi(n)^{\frac{1}{n}}
<1$,
and fix  $\ctransrate \in (0, \climsup)$.
There exists  $\cgam>0$ such that 
\begin{equation}
 \label{eq:transdecay}
 \varphi(n) \leq \cgam e^{-n\ctransrate}
 \quad
 \text{for all $n\in \bN$}. 
\end{equation}
The twisted transfer operator, for all  $b\in \bR$, $n\in \bN$ is given by the formula
\begin{equation}
\label{eq:twisttrans}
\cL_{b}^{n}h(y) =  \sum_{x\in f^{-n}(y)} J_{n} (x) \cdot h(x)\cdot e^{ib \tau_{n}(x)}.
\end{equation}
A simple estimate shows that $ \norm{\cL_{b}^{n}  h}_{\L{1}{\base} }  \leq  \norm{h}_{\L{1}{\base} }$.
We will work extensively with functions of \emph{bounded variation} due to the suitability of this function space for discontinuities. The Banach space is denoted $(\mathbf{BV} ,\norm{\cdot}_{\mathbf{BV}})$, variation is denoted by $\operatorname{Var}(\cdot)$, and $\norm{\cdot}_{\mathbf{BV}} : = \operatorname{Var}(\cdot) +  \norm{\cdot}_{\L{1}{\base} }$ as usual.
We have the following \emph{Lasota-Yorke} inequality.
\begin{lem}\label{lem:LY}
There exists $\lambda>0$, $\cly>0$ such that, for all $n\in \bN$, $b\in \bR$, $h\in \mathbf{BV}$
\[
\norm{\cL_{b}^{n} h}_{\mathbf{BV}} \leq \cly \lambda^{-n} \norm{h}_{\mathbf{BV}} + \cly(1 + \abs{b}) \norm{h}_{\L{1}{\base} }
\]
\end{lem}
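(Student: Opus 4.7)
The plan is to estimate $\operatorname{Var}(\cL_{b}^{n}h)$ directly by summing over the inverse branches of $f^{n}$, combining standard bounded distortion with the uniform bound of Lemma~\ref{lem:tauprime} to isolate the dependence on $b$. The inequality $\norm{\cL_{b}^{n}h}_{\L{1}{\base}} \leq \norm{h}_{\L{1}{\base}}$ is already noted above, so only the variation term requires work.

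Let $\{g_{i}\}$ enumerate the smooth inverse branches of $f^{n}$, with $g_{i}: I_{i} \to g_{i}(I_{i})$, so that the image intervals $\{I_{i}\}$ partition $\base$ modulo the finite discontinuity set of $f^{n}$. Then
\[
\cL_{b}^{n}h \;=\; \sum_{i} \phi_{i}\,\one_{I_{i}},
\qquad
\phi_{i}(y) := J_{n}(g_{i}(y))\,h(g_{i}(y))\,e^{ib\tau_{n}(g_{i}(y))},
\]
and the variation splits as
\[
\operatorname{Var}(\cL_{b}^{n}h)
\;\leq\;
\sum_{i}\operatorname{Var}_{I_{i}}(\phi_{i})
\;+\;
\sum_{i} \bigl( \abs{\phi_{i}(a_{i}^{+})} + \abs{\phi_{i}(b_{i}^{-})} \bigr),
\]
where $a_{i},b_{i}$ are the endpoints of $I_{i}$. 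On each $I_{i}$, I would differentiate $\phi_{i}$ by the product rule and change variables $x = g_{i}(y)$. This produces three integrals: a $J_{n}'$-term, controlled by bounded distortion ($\abs{J_{n}'} \leq C\,J_{n}$ with $C$ independent of $n$); an $h'$-term, equal to $\int_{g_{i}(I_{i})} J_{n}\,d|h'|$; and a $b$-dependent $\tau_{n}'$-term, equal to $\abs{b}\int_{g_{i}(I_{i})} J_{n}\abs{\tau_{n}'}\cdot\abs{h}\,dx$. Lemma~\ref{lem:tauprime} supplies the decisive uniform bound $\abs{J_{n}\tau_{n}'} \leq \tfrac{1}{2}\ctaup$, so after summing over $i$ and using $\norm{J_{n}}_{\infty} \leq \tilde\lambda^{-n}$ the three contributions respectively give $C\tilde\lambda^{-n}\norm{h}_{\L{1}{\base}}$, $\tilde\lambda^{-n}\operatorname{Var}(h)$, and $\tfrac{1}{2}\ctaup\abs{b}\norm{h}_{\L{1}{\base}}$.

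For the boundary contributions, I would invoke the standard BV inequality
\[
\abs{\phi_{i}(y_{0})} \;\leq\; \frac{1}{|I_{i}|}\int_{I_{i}}\abs{\phi_{i}}\,dy \;+\; \operatorname{Var}_{I_{i}}(\phi_{i}),
\]
valid for any $y_{0} \in I_{i}$. The variation piece is absorbed into (twice) the estimate from the previous paragraph, while the average piece, after change of variables, becomes $|I_{i}|^{-1}\int_{g_{i}(I_{i})}\abs{h}\,dx$. Using that $f$ is covering together with the finiteness of the discontinuity sets of $f$ and $\tau$, only a controlled number of short branches $I_{i}$ can arise at each level, and the full-length branches sum to $O(\norm{h}_{\L{1}{\base}})$. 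Combining everything yields the stated inequality; if the prefactor of $\operatorname{Var}(h)$ is not strictly less than one for $n=1$, the argument is first run for an iterate $n_{0}$ large enough to make it contracting and then iterated using $\norm{\cL_{b}^{k}h}_{\L{1}{\base}} \leq \norm{h}_{\L{1}{\base}}$ and a geometric summation.

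I expect the main obstacle to be the uniform-in-$n$ bookkeeping for the boundary terms, since the number of inverse branches of $f^{n}$ grows exponentially in $n$. The estimate must be arranged so that the sum of endpoint values is dominated by $\norm{h}_{\L{1}{\base}}$ with a constant depending only on the fixed data $(f,\tau)$ and not on $n$ or on the number of monotonicity pieces, and this is exactly where the covering property of $f$ enters essentially.
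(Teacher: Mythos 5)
You take a correct but genuinely different route from the paper: a direct, Rychlik-style branch decomposition of the variation (sum of $\operatorname{Var}_{I_i}(\phi_i)$ plus endpoint jumps, the latter controlled by the averaging inequality $\abs{\phi_i(y_0)}\leq \abs{I_i}^{-1}\int_{I_i}\abs{\phi_i}+\operatorname{Var}_{I_i}(\phi_i)$). The paper instead works with the dual characterisation $\operatorname{Var}(\cL_b h)=\sup_{\abs{\eta}\leq 1}\bigl|\int(\cL_b h)\eta'\bigr|$, rewrites $e^{ib\tau}\eta'\circ f$ via the Leibniz identity for $\psi:=J\cdot\eta\circ f\cdot e^{ib\tau}$, and subtracts a piecewise-affine function so that $\psi$ minus the correction is continuous; in that version, all of your boundary bookkeeping is packaged into the single constant $\norm{\phi'}_{\mathbf{L^\infty}}$. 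Both routes isolate the $\abs{b}$-dependence by the same bound $\norm{\tau' J}_{\mathbf{L^\infty}}\leq\tfrac12\ctaup$ (Lemma~\ref{lem:tauprime}), both need the contraction from $\inf\abs{f'}>2$ or a higher iterate, and both invoke the finiteness of the discontinuity set; the paper's version is slicker to write since it avoids enumerating branches, while yours is more elementary and makes the jump terms visible. Two small repairs to make in your write-up: (1) the distortion claim $\abs{J_n'}\leq C J_n$ with $C$ independent of $n$ is in general false --- the logarithmic derivative of $J_n$ is unbounded in $n$ --- but the uniform bound $\abs{J_n'}\leq\cjp$ does hold (this is exactly Lemma~\ref{lem:estimateJp}) and suffices here, since that term lands on $\norm{h}_{\L{1}{\base}}$ anyway; (2) covering is not what bounds the boundary sum: for a fixed iterate $n_0$ the finiteness of the discontinuity sets of $f^{n_0}$ \emph{and} $\tau_{n_0}$ already gives $\min_i\abs{I_i}>0$, and you should refine $\{I_i\}$ by the discontinuities of $\tau_{n_0}\circ g_i$ (not only those of $f^{n_0}$), since otherwise $\phi_i$ may have jumps inside $I_i$ and the pointwise product-rule computation is not valid there.
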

\begin{proof}
The proof is essentially standard (see for example~\cite{Keller:1989}) but it is important to note the factor of $\abs{b}$ which appears in front of the $\mathbf{L^{1}}$ norm. %Here we sketch the  argument.

We already know that $ \norm{\cL_{b}h}_{\mathbf{L^{1}} } \leq  \norm{h}_{\mathbf{L^{1}} }$.
Note that 
\[
\operatorname{Var}(h) = \sup \left\{   \norm{h\cdot \eta' }_{\mathbf{L^{1}} } : \eta \in \cC^{1}(\base, \bC), \abs{\eta} \leq 1  \right\}.
\]
Consequently we must estimate $\norm{\cL_{b} h\cdot \eta' }_{ \mathbf{L^{1}} } = \norm{h\cdot (e^{ib\tau} \cdot \eta'\circ f) }_{ \mathbf{L^{1}} }$. In order to do this note that (for convenience we denote $J := J_{1} = 1/\abs{f'}$)
\[
\left[  J \cdot \eta \circ f \cdot e^{ib\tau}  \right]'
= J' \cdot \eta \circ f \cdot e^{ib\tau}
+ ib \tau' \cdot J \cdot \eta \circ f \cdot e^{ib\tau}
+  (e^{ib\tau} \cdot  \eta'\circ f).
\]
This means that 
\[
\operatorname{Var}(\cL_{b}h)
\leq
\norm{J'}_{\mathbf{L^{\infty}}}
\norm{h}_{\mathbf{L^{1}}}
+
\abs{b} \norm{\tau' \cdot J}_{\mathbf{L^{\infty}}}
\norm{h}_{\mathbf{L^{1}}}
+
\norm{h \cdot \smash{ \left[  J \cdot \eta \circ f \cdot e^{ib\tau}  \right]' } }_{\mathbf{L^{1}}}
\]
The remaining problem  is that $[  J \cdot \eta \circ f \cdot e^{ib\tau}  ]$ could be discontinuous. Therefore we introduce the quantity $\phi : \base \to \bR$ which is piecewise affine (discontinuous only where $[  J \cdot \eta \circ f \cdot e^{ib\tau}  ]$ is discontinuous) and such that 
$([  J \cdot \eta \circ f \cdot e^{ib\tau}  ] - \phi)(x)$ tends to $0$ as $x$ approaches any discontinuity point.
This means that $[  J \cdot \eta \circ f \cdot e^{ib\tau}  ] $  is continuous and  piecewise\footnote{That $[  J \cdot \eta \circ f \cdot e^{ib\tau}  ] $  is continuous and  piecewise~$\cC^{1}$ means that it may be approximated by a $\cC^{1}$ function with error small in the appropriate sense that makes no difference to the final estimate.}~$\cC^{1}$.
Note that $\norm{\phi}_{\mathbf{L^{\infty}}} \leq \norm{J}_{\mathbf{L^{\infty}}}$ and so
$\norm{[  J \cdot \eta \circ f \cdot e^{ib\tau}  ] - \phi}_{\mathbf{L^{\infty}}} \leq 2 \norm{J}_{\mathbf{L^{\infty}}}$. 
On the other hand, taking advantage of the finite number of discontinuities in this setting, we know that  $\norm{\phi'}_{\mathbf{L^{\infty}}} $ is bounded by some constant which depends on the size of the smallest image of an element of the partition of smoothness. 
We have shown that
\[
\operatorname{Var}(\cL_{b}h)
\leq
2  \norm{J}_{\mathbf{L^{\infty}}} \operatorname{Var}(h)
+
( \norm{J'}_{\mathbf{L^{\infty}}} + \abs{b} \norm{\tau' \cdot J}_{\mathbf{L^{\infty}}} + \norm{\phi'}_{\mathbf{L^{\infty}}} )
\norm{h}_{\mathbf{L^{1}}}.
\]
This suffices\footnote{By considering higher iterates of the same argument, if one were interested in optimal estimates, $\lambda$ can be chosen arbitrarily close to $ \limsup_{n\to \infty} \smash{  \abs{J_{n}}^{\frac{1}{n}} }$.}
 since we assumed that $\inf\abs{f'}>2$ and so $2  \norm{J}_{\mathbf{L^{\infty}}} <1$.
Consequently the above estimate may be iterated to produce an estimate for all $n\in \bN$. 
\end{proof}
These estimates and the compactness of the embedding $\mathbf{BV} \hookrightarrow  \L{1}{\base}$, by the usual arguments (see, for example~\cite{keller2005sgo}), imply that the operator $\cL_{b} : \mathbf{BV} \to \mathbf{BV}$ has spectral radius not greater than $1$ and essential spectral radius not greater than $\lambda^{-1} \in (0,1)$. 
The spectral radius of $\cL_{0}  : \mathbf{BV} \to \mathbf{BV}$ is equal to $1$.

It is convenient to introduce the equivalent norm
\[
\norm{h}_{(b)} := (1+\abs{b})^{-1}  \norm{h}_{\mathbf{BV}}  + \norm{h}_{\L{1}{\base} }.
\]
The main purpose of this section is to 
prove the following result.
\begin{prop}\label{prop:mainuse}
 There exists $b_0 >0$, $\cfour>0$, and $\cten>0$ such that
 \[
  \norm{\smash{\cL_{b}^{n(b)}} }_{(b)}
  \leq
  e^{-n(b) \cten },
  \quad \quad\text{for all $\abs{b} \geq b_0$,  $n(b) := \lceil \cfour \ln \abs{b}  \rceil$}.
 \]
  \end{prop}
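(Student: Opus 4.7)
The plan is to combine the Lasota-Yorke inequality (Lemma~\ref{lem:LY}) with a key $\mathbf{L^{1}}$ contraction on the same $\log\abs{b}$ time scale. The latter is the content of Proposition~\ref{prop:mainlem} (announced in the introduction and proved in Section~\ref{sec:mainestimate} via Dolgopyat-style oscillatory cancellation, using the transversality estimate~\eqref{eq:transdecay}). The norm $\norm{\cdot}_{(b)}$, with its $(1+\abs{b})^{-1}$ rescaling of the $\mathbf{BV}$-part, is engineered precisely so that the $(1+\abs{b})$ loss in Lemma~\ref{lem:LY} is absorbed.

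Dividing the Lasota-Yorke bound by $(1+\abs{b})$ and using $(1+\abs{b})^{-1}\norm{h}_{\mathbf{BV}} \leq \norm{h}_{(b)}$,
\[
(1+\abs{b})^{-1}\norm{\cL_{b}^{n}h}_{\mathbf{BV}}
\leq \cly \lambda^{-n}\norm{h}_{(b)} + \cly \norm{h}_{\L{1}{\base}}.
\]
For $n = n(b) = \lceil \cfour \ln \abs{b}\rceil$ the coefficient $\cly \lambda^{-n(b)} = \cly \abs{b}^{-\cfour \ln \lambda}$ can be driven below any prescribed $e^{-n(b)\cten}$ by taking $\cfour$ large, since $n(b)\asymp \ln \abs{b}$. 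For the $\mathbf{L^{1}}$-part I would take, as input from Proposition~\ref{prop:mainlem}, an estimate of the form
\[
\norm{\cL_{b}^{n(b)}h}_{\L{1}{\base}} \leq e^{-n(b)\gamma_{0}}\norm{h}_{(b)}
\]
with some $\gamma_{0} > 0$, valid for all $\abs{b} \geq b_{0}$.

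The two estimates do not combine in a single shot, since the leftover $\cly \norm{h}_{\L{1}{\base}}$ in the Lasota-Yorke bound carries no decay factor. To get around this I would consider $\cL_{b}^{2n(b)} = \cL_{b}^{n(b)} \circ \cL_{b}^{n(b)}$ and apply Lemma~\ref{lem:LY} to the outer block. The $\mathbf{BV}/(1+\abs{b})$ part of the double iterate is then bounded by $\cly \lambda^{-n(b)}\norm{\cL_{b}^{n(b)}h}_{(b)}$ (small by the first estimate applied once more, together with the $\mathbf{L^{1}}$ contraction) plus $\cly \norm{\cL_{b}^{n(b)}h}_{\L{1}{\base}}$ (small by the main estimate); the $\mathbf{L^{1}}$ part of the double iterate is bounded using $\mathbf{L^{1}}$ non-expansivity of $\cL_{b}$ followed by the main estimate. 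Choosing $\cten$ smaller than $\min(\gamma_{0},\ln\lambda)$ and $\cfour$ large enough then yields $\norm{\cL_{b}^{2n(b)}h}_{(b)} \leq e^{-2n(b)\cten}\norm{h}_{(b)}$, and a relabelling of $\cfour$ and $\cten$ (absorbing the factor~$2$ in the time scale) produces the statement as written.

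The main obstacle is of course Proposition~\ref{prop:mainlem} itself: implementing Dolgopyat's oscillatory cancellation in the presence of discontinuities is where the hard work lies, and it is precisely there that the transversality from Section~\ref{sec:trans} is used. The content of Proposition~\ref{prop:mainuse} itself is, by comparison, a bookkeeping exercise that promotes the single-step $\mathbf{L^{1}}$ contraction on the scale $n(b)\asymp\ln\abs{b}$ to a contraction in the combined norm $\norm{\cdot}_{(b)}$ on the same (or doubled) scale, exploiting the rescaling built into that norm.
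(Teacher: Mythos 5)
Your overall scheme --- Lasota--Yorke in the $(b)$-norm, the doubled iterate to kill the non-decaying $\cly\norm{h}_{\L{1}{\base}}$ leftover, and plugging in an $\mathbf{L^{1}}$ contraction on the $\log\abs{b}$ time scale, with the $(1+\abs{b})^{-1}$ rescaling in $\norm{\cdot}_{(b)}$ doing the bookkeeping --- is exactly the paper's. However, there is a genuine gap in the step you ``take as input from Proposition~\ref{prop:mainlem}''. That proposition is stated and proved only for indicator functions $\one_{\Hhh_{\ell}}$ of intervals of length comparable to $\abs{b}^{-(1+\xi)}$, not for an arbitrary $h\in\mathbf{BV}$. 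Passing from indicators to general $h$ is a real bridging step, not a restatement: the paper does it (Lemma~\ref{lem:approx}) by replacing $h$ with the function $h_{b}$ obtained by averaging on each $\Hhh_{\ell}$, and the $\mathbf{L^{1}}$ error of this replacement is controlled by $2\abs{b}^{-(1+\xi)}\norm{h}_{\mathbf{BV}}$. Because the paper wants this error proportional to $\norm{h}_{\L{1}{\base}}$, it can only absorb it under the hypothesis $\norm{h}_{\mathbf{BV}} \leq 2\cly(1+\abs{b})\norm{h}_{\L{1}{\base}}$, which forces a case split; the complementary ``$\mathbf{BV}$-large'' regime is handled by Lemma~\ref{lem:easy} directly from Lasota--Yorke, and the two regimes are patched together at the very end. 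Your formulation, with $\norm{h}_{(b)}$ rather than $\norm{h}_{\L{1}{\base}}$ on the right of the $\mathbf{L^{1}}$ bound, would actually dissolve that case split --- since $2\abs{b}^{-(1+\xi)}\norm{h}_{\mathbf{BV}} \leq 4\abs{b}^{-\xi}\norm{h}_{(b)}$ and $\abs{b}^{-\xi}$ is exponentially small in $n(b)$ --- but you have to carry out that derivation rather than assume it; as written, the $\mathbf{L^{1}}$ estimate you invoke is not the statement of Proposition~\ref{prop:mainlem}.

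A smaller slip: $\cfour$ is not a free parameter to be ``taken large''. The time scale $\nn(b)=n_{1}(b)+n_{2}(b)$ is fixed by the constants $\cone,\ctwo$ of~\eqref{eq:defconsts}, which are pinned down by $\lambda,\beta,\Lambda$ through the oscillatory-cancellation argument in Section~\ref{sec:mainestimate}, and $\cfour$ is determined accordingly (up to the doubling). What you can and should take large is $b_{0}$: once $\cten<\min(\ln\lambda,\gamma_{0})$ is fixed, the multiplicative prefactors ($\cly$, $2\cly^{2}$, $\cm$, etc.) are absorbed by the exponential gap because $n(b)\asymp\ln\abs{b}\to\infty$ as $\abs{b}\to\infty$.
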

\noindent
The remainder of the section will be devoted to the proof of the above. The proof is self contained apart from using Proposition~\ref{prop:mainlem} (see below) whose proof is postponed to Section~\ref{sec:mainestimate}.

\begin{lem}\label{lem:LY2}
For all $n\in \bN$, $h\in \mathbf{BV}$, $b\in \bR$
\[
\norm{\cL_{b}^{n} h}_{(b)} \leq \cly \lambda^{-n} \norm{h}_{(b)} + \cly \norm{h}_{\L{1}{\base} }.
\]
\end{lem}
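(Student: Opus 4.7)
The plan is to deduce this refined Lasota--Yorke inequality as an almost immediate consequence of Lemma~\ref{lem:LY} together with the elementary $\mathbf{L^{1}}$-contraction $\norm{\cL_{b}^{n} h}_{\mathbf{L^{1}}} \leq \norm{h}_{\mathbf{L^{1}}}$. The whole point of introducing the equivalent norm $\norm{\cdot}_{(b)}$ is to absorb the undesirable factor $(1+\abs{b})$ appearing in front of $\norm{h}_{\mathbf{L^{1}}}$ in the standard Lasota--Yorke inequality: scaling the $\mathbf{BV}$ component by $(1+\abs{b})^{-1}$ trades a $b$-dependent constant against a $b$-independent one, which is what subsequent arguments need.

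First I would multiply the conclusion of Lemma~\ref{lem:LY} through by $(1+\abs{b})^{-1}$ to obtain
\[
(1+\abs{b})^{-1}\norm{\cL_{b}^{n} h}_{\mathbf{BV}}
\leq
\cly \lambda^{-n}\, (1+\abs{b})^{-1}\norm{h}_{\mathbf{BV}}
+
\cly \norm{h}_{\mathbf{L^{1}}}.
\]
Iterating the trivial bound $\norm{\cL_{b} h}_{\mathbf{L^{1}}} \leq \norm{h}_{\mathbf{L^{1}}}$ (already recorded just before Lemma~\ref{lem:LY}) gives $\norm{\cL_{b}^{n} h}_{\mathbf{L^{1}}} \leq \norm{h}_{\mathbf{L^{1}}}$. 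Adding these two inequalities and using the definition of $\norm{\cdot}_{(b)}$ yields
\[
\norm{\cL_{b}^{n} h}_{(b)}
\leq
\cly \lambda^{-n}\, (1+\abs{b})^{-1}\norm{h}_{\mathbf{BV}}
+
(\cly+1) \norm{h}_{\mathbf{L^{1}}}.
\]

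Finally, to reach the precise form of the statement, I would rewrite the right-hand side as
\[
\cly \lambda^{-n}\norm{h}_{(b)}
+ \bigl(\cly + 1 - \cly \lambda^{-n}\bigr) \norm{h}_{\mathbf{L^{1}}},
\]
which is bounded above by $\cly \lambda^{-n}\norm{h}_{(b)} + (\cly+1)\norm{h}_{\mathbf{L^{1}}}$. Enlarging the constant $\cly$ once (it has already been introduced by Lemma~\ref{lem:LY}) to absorb the $+1$ completes the estimate in the form stated. There is no genuine obstacle here; the only thing to be careful about is consistent use of the symbol $\cly$, which the paper is clearly treating as a generic constant that may be taken larger when convenient.
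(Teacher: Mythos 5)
Your proposal is correct and takes exactly the same route as the paper, whose proof of Lemma~\ref{lem:LY2} is just the one-line observation that it follows from the definition of $\norm{\cdot}_{(b)}$ together with Lemma~\ref{lem:LY} and the $\mathbf{L^{1}}$-contraction. Your observation that the computation actually yields $\cly+1$ (or $\cly+1-\cly\lambda^{-n}$) in front of $\norm{h}_{\mathbf{L^{1}}}$ is accurate; the paper is implicitly treating $\cly$ as a constant that may be enlarged, and the downstream uses (e.g.\ the choice of $n_{0}$ in Lemma~\ref{lem:easy}) would accommodate this harmlessly.
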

\begin{proof}
This is a direct result of the definition of the norm and the Lasota-Yorke estimate (Lemma~\ref{lem:LY}).
\end{proof}

First we deal with the easy case when $ \norm{ h}_{\mathbf{BV}} $ is large in comparison to $\norm{h}_{\L{1}{\base} } $.
Let $n_{0}:= \lceil \ln (4 \cly) / \ln \lambda   \rceil$.
\begin{lem}\label{lem:easy}
Suppose that $h\in \mathbf{BV}$, satisfying 
 $2 \cly (1+\abs{b}) \norm{h}_{\L{1}{\base} } \leq   \norm{h}_{\mathbf{BV}}$.
 Then $\norm{\cL_{b}^{n_{0}} h}_{(b)} \leq \frac{3}{4} \norm{ h}_{(b)} $.
 \end{lem}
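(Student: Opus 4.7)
The plan is to apply the Lasota-Yorke-type inequality of Lemma~\ref{lem:LY2} once, with $n=n_{0}$, and then exploit the assumption $2\cly(1+\abs{b})\norm{h}_{\L{1}{\base}}\leq\norm{h}_{\mathbf{BV}}$ to absorb the $\cly\norm{h}_{\L{1}{\base}}$ remainder into a fraction of $\norm{h}_{(b)}$.

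First I would note that the choice $n_{0}:=\lceil \ln(4\cly)/\ln\lambda\rceil$ gives $\lambda^{n_{0}}\geq 4\cly$, and therefore $\cly\lambda^{-n_{0}}\leq \tfrac{1}{4}$. Substituting this into the bound of Lemma~\ref{lem:LY2} yields
\[
\norm{\cL_{b}^{n_{0}}h}_{(b)} \leq \tfrac{1}{4}\norm{h}_{(b)} + \cly \norm{h}_{\L{1}{\base}}.
\]

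Second, I would show that under the hypothesis the last term is at most $\tfrac{1}{2}\norm{h}_{(b)}$. Indeed, the assumption $2\cly(1+\abs{b})\norm{h}_{\L{1}{\base}}\leq\norm{h}_{\mathbf{BV}}$ rewrites as $(1+\abs{b})^{-1}\norm{h}_{\mathbf{BV}} \geq 2\cly\norm{h}_{\L{1}{\base}}$, so by the definition of the norm $\norm{\cdot}_{(b)}$,
\[
\norm{h}_{(b)} = (1+\abs{b})^{-1}\norm{h}_{\mathbf{BV}} + \norm{h}_{\L{1}{\base}} \geq 2\cly\norm{h}_{\L{1}{\base}},
\]
hence $\cly\norm{h}_{\L{1}{\base}} \leq \tfrac{1}{2}\norm{h}_{(b)}$.

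Combining the two bounds gives $\norm{\cL_{b}^{n_{0}}h}_{(b)} \leq \tfrac{1}{4}\norm{h}_{(b)} + \tfrac{1}{2}\norm{h}_{(b)} = \tfrac{3}{4}\norm{h}_{(b)}$, which is the required inequality. There is no real obstacle here: the lemma is simply a quantitative repackaging of the Lasota-Yorke estimate in the regime where the $\mathbf{BV}$ seminorm dominates, and the only trick is the bookkeeping that lets $\cly\norm{h}_{\L{1}{\base}}$ be absorbed by $\norm{h}_{(b)}$ using the standing assumption. The complementary ``hard'' case, where $\norm{h}_{\mathbf{BV}}$ is small compared to $(1+\abs{b})\norm{h}_{\L{1}{\base}}$, is precisely where the oscillatory cancellation of Proposition~\ref{prop:mainlem} will have to be invoked.
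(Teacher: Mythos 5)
Your proof is correct and follows exactly the same route as the paper: apply Lemma~\ref{lem:LY2} with $n=n_{0}$, use the choice of $n_{0}$ to make $\cly\lambda^{-n_{0}}\leq\tfrac{1}{4}$, and absorb the $\cly\norm{h}_{\L{1}{\base}}$ remainder into $\tfrac{1}{2}\norm{h}_{(b)}$ via the standing hypothesis. The paper merely compresses these steps into two sentences; you have spelled out the same argument.
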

\begin{proof}
The definition of $n_{0} \in \bN$ is such that $\cly \lambda^{-n_{0}} + \frac{1}{2} \leq   \frac{3}{4}$.
The conclusion then follows from Lemma~\ref{lem:LY2}.
\end{proof}

This means that we only need to worry about estimating in the case where  $2 \cly (1+\abs{b}) \norm{h}_{\L{1}{\base} } >   \norm{h}_{\mathbf{BV}}$.
This is the case where the density can be considered to be ``almost constant'' as long as we look on the scale of $\abs{b}^{-1}$.
Furthermore it will suffice to estimate the $\mathbf{L^{1}}$ norm and not the $\mathbf{BV}$ norm as demonstrated by the following calculation.
Using Lemma~\ref{lem:LY2}, for any $n\in \bN$
\begin{equation}\label{eq:iterate}
\begin{aligned}
\norm{\cL_{b}^{2n} h}_{(b)} 
&\leq \cly \lambda^{-n} \norm{\cL_{b}^{n} h}_{(b)} + \cly \norm{\cL_{b}^{n} h}_{\L{1}{\base} }\\
&\leq \cly^{2} \lambda^{-2n} \norm{ h}_{(b)}  + \cly^{2} \lambda^{-n} \norm{h}_{\L{1}{\base} }   + \cly \norm{\cL_{b}^{n} h}_{\L{1}{\base} }\\
&\leq  2 \cly^{2} \lambda^{-n} \norm{ h}_{(b)}   + \cly \norm{\cL_{b}^{n} h}_{\L{1}{\base} }.
\end{aligned}
\end{equation}
It therefore remains to obtain exponential contraction of  $ \norm{\cL_{b}^{n} h}_{\L{1}{\base} }$  in terms of $\norm{ h}_{(b)}$ in the case when $ 2 \cly (1+\abs{b} )\norm{h}_{\L{1}{\base} } > \norm{h}_{\mathbf{BV}}$.

In order to later deal with discontinuities we now introduce a  ``growth lemma'' suitable for this setting.
Fix $\delta>0$ such that, for any interval $\omega \subset \base$ of size $\abs{\omega}\leq \delta$  the image $f\omega$ consists of at most two connected components.
We will define unions of open intervals $\Omega_{n}$ for all $n\in \bN$ iteratively.
Let $\Omega_{0} \subset \base$ be an interval, $\abs{\Omega_{0}}\leq \delta$.
Suppose that $\Omega_{n}$ is already defined.
Let  $\omega$ be one of the connected components of $\Omega_{n}$. 
The image $f \omega$ is  the union of intervals, some could be large, some could be small. It is convenient to maintain all intervals of size less than $\delta$ and so we artificially chop long intervals so that they are always of size greater than $\delta/2 $ and less than $\delta$. 
In this fashion let the set ${\{\omega_{k}\}}_{k}$ be a set of open intervals which exhaust $\omega$ except for a zero measure set and such that each $f\omega_{k}$ is a single interval of length not greater than $\delta$.
The set  $\Omega_{n+1} $ is defined to be partition of $\Omega_{n}$ produced by following the same procedure for each connected component of $\Omega_{n}$.

We must control the measure of points close to the boundaries of $\Omega_{n} =  {\{ \omega_{j} \}}_{j}$.
For any $n\in \bN$, $x\in \Omega_{n}$, let $r_{n}(x) := d(f^{n}(x), f^{n}( \partial \Omega_{n}) )$,
and hence let ($\measure$ denotes Lebesgue measure)
\[
\cZ_{\epsilon}\Omega_{n} :=
\measure({ \{ x\in \Omega_{n}  : r_{n}(x) \leq \epsilon  \}  }).
\]
% Finally let 
% \[
%  \cZ\Omega_{n}:= \displaystyle \sup_{\epsilon>0}\tfrac{1}{\epsilon} \cZ_{\epsilon}\Omega_{n}.
% \]
Let $\cgrowth:= {4 \Lambda \beta} {\delta^{-1} \lambda^{-1} (\beta -{1})^{-1}}$.
\begin{lem}
\label{lem:growth}
There exists $ \beta >1$ such that, for all $n\in \bN$, $\epsilon>0$,
\[
\cZ_{\epsilon}\Omega_{n} \leq
 \beta^{-n} \lambda^{n} \cZ_{\epsilon/ \lambda^{n}}\Omega_{0} 
+   \epsilon  \cgrowth \abs{ \Omega_{0}  }.
\]
\end{lem}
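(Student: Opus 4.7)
The plan is to derive the statement as an iterated one-step inequality of the form
\[
\cZ_\epsilon\Omega_{n+1} \le \beta^{-1}\lambda\,\cZ_{\epsilon/\lambda}\Omega_n + \frac{4\Lambda}{\delta\lambda}\epsilon\abs{\Omega_0}
\]
for a suitable $\beta>1$. Iterating this bound $n$ times, the leading term becomes $(\beta^{-1}\lambda)^n\cZ_{\epsilon/\lambda^n}\Omega_0=\beta^{-n}\lambda^n\cZ_{\epsilon/\lambda^n}\Omega_0$, and the accumulated correction telescopes into $\frac{4\Lambda}{\delta\lambda}\epsilon\abs{\Omega_0}\sum_{k=0}^{n-1}\beta^{-k}\le\frac{4\Lambda\beta}{\delta\lambda(\beta-1)}\epsilon\abs{\Omega_0}$, which is precisely the desired estimate with $\cgrowth = 4\Lambda\beta/(\delta\lambda(\beta-1))$.

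To establish the one-step inequality, fix a component $\omega\subset\Omega_n$ and consider its refinement $\{\omega_k\}\subset\Omega_{n+1}$ obtained by chopping $f\omega$ into pieces of length at most $\delta$. Each boundary point $p\in\partial\omega_k$ falls into one of three types: (i) \emph{inherited} boundaries, i.e.\ $p\in\partial\omega$; (ii) \emph{discontinuities} of $f$ lying inside $\omega$; and (iii) \emph{artificial chop points}, meaning preimages under $f|_\omega$ of cuts introduced into $f\omega$. For each such $p$, the corresponding contribution to $\cZ_\epsilon\Omega_{n+1}$ is $\measure\{x\in\omega_k:\abs{f^{n+1}(x)-f^{n+1}(p)}\le\epsilon\}$, and the plan is to bound it using the uniform expansion $\abs{(f^{n+1})'}\ge\lambda^{n+1}$ together with bounded distortion of $f^n$ restricted to components of $\Omega_n$ (the standard Renyi-type estimate for piecewise expanding maps).

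For type (i), the substitution $y=f(x)$ together with $\abs{f'}\ge\lambda$ converts the neighborhood condition into $\abs{f^n(y)-f^n(p)}\le\epsilon/\lambda$ up to distortion; summing over $p\in\partial\omega$ and over components of $\Omega_n$ and tracking the Jacobian recovers $\lambda\cZ_{\epsilon/\lambda}\Omega_n$. The additional contracting factor $\beta^{-1}<1$ arises because each $\omega$ carries at most two inherited boundary points whereas the number of refinements $\omega_k$ is proportional to $\abs{f\omega}/\delta$, so only a fraction bounded away from one of the refinements actually contain an inherited boundary. For types (ii) and (iii), the number of artificial chop points in $f\Omega_n$ is at most $2\abs{f\Omega_n}/\delta\le 2\Lambda\abs{\Omega_0}/\delta$, and the discontinuities of $f$ contribute a fixed finite count; each new boundary point produces a preimage of measure $\le 2\epsilon/\lambda$, giving an aggregate new-boundary correction $\le\tfrac{4\Lambda}{\delta\lambda}\epsilon\abs{\Omega_0}$.

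The main obstacle is justifying the $\beta^{-1}$ contraction on the inherited side, which requires combining the counting argument on refinements with distortion estimates for $f^n$ to ensure the factor is strictly less than $1$ uniformly in $n$. Once this ingredient is isolated, the rest---assembling the one-step inequality and iterating with a geometric sum---is essentially mechanical.
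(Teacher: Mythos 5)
Your overall strategy is the same as the paper's: derive a one-step inequality of the form $\cZ_\epsilon\Omega_{n+1}\le\beta^{-1}\lambda\,\cZ_{\epsilon/\lambda}\Omega_n+(\text{additive term})$, iterate, and collect the geometric sum. The classification of boundary points (inherited, discontinuity, artificial) and the telescoping are exactly as in the paper. However, there is a genuine gap in your explanation of where the contraction factor $\beta^{-1}<1$ comes from, and it is precisely the point you yourself flag as ``the main obstacle.''

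You attribute the factor $\beta^{-1}$ to a \emph{dilution} argument: each $\omega$ has at most two inherited boundary points, while the number of refinements $\omega_k$ is proportional to $\abs{f\omega}/\delta$, so only a small fraction of refinements contain an inherited boundary. This reasoning does not work, because $\cZ_\epsilon$ is a Lebesgue measure of a neighbourhood of the boundary, not a count of intervals. The set $\{x : d(f^{n+1}x, f^{n+1}p)\le\epsilon\}$ near a fixed inherited boundary point $p$ has the same measure no matter how many additional cuts are introduced elsewhere; refining does not thin it out. Relatedly, the factor of $\lambda$ you claim to ``recover by tracking the Jacobian'' on the inherited side is spurious: passing from the condition $r_{n+1}(x)\le\epsilon$ to $r_n(x)\le\epsilon/\lambda$ is a set containment in the same $x$-variable, with no change of variables and hence no Jacobian. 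You have introduced a phantom $\lambda$ and then invoked an unjustified $\beta^{-1}$ to cancel it.

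The paper's mechanism is much simpler and purely algebraic. The one-step inequality is
\[
\cZ_\epsilon\Omega_{n+1}\le 2\,\cZ_{\epsilon/\lambda}\Omega_n + 4\epsilon\frac{\Lambda}{\delta\lambda}\measure(\Omega_n),
\]
where the coefficient $2$ comes from the choice of $\delta$ ensuring that $f\omega$ has at most two connected components, so that inherited boundary and discontinuity cuts together are controlled by $2\cZ_{\epsilon/\lambda}\Omega_n$. One then simply \emph{defines} $\beta$ by $2=\beta^{-1}\lambda$, i.e.\ $\beta=\lambda/2$, which is $>1$ precisely because of the standing hypothesis $\tilde\lambda=\inf f'>2$ (hence $\lambda>2$). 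No distortion estimates, no counting of refinements, and no further justification are needed; the contraction is bought entirely by $\lambda>2$. You should replace the dilution heuristic with this observation — it is the reason the hypothesis $\inf f'>2$ appears in the paper in the first place.
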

\begin{proof}
Suppose for the moment that $\Omega_{n}$ consists of just one element, i.e., $\Omega_{n} = \{\omega\}$.
We will estimate $\cZ_{\epsilon}\Omega_{n+1} $.
The image $f \omega$ consists of at most two connected components. 
But some of these connected components could be large in which case they will be cut into smaller pieces of size between $\delta/2 $ and  $\delta$.
The set $ \partial \Omega_{n+1}$ consists of points which come from one of  three different origins:
from $ \partial \Omega_{n}$; from a cut due to the discontinuities of the map; or from the artificial cuts. The first two possibilities are bounded by $2 \cZ_{\epsilon/ \lambda}\Omega_{n} $. The total length of $f \omega$ is not greater than $\Lambda \measure( \omega )$ and so the total number of artificial cuts is not greater than $2 \delta^{-1} \Lambda \measure( \omega )$. Summing these terms  we obtain the estimate
\[
\cZ_{\epsilon}\Omega_{n+1} \leq
2 \cZ_{\epsilon/ \lambda}\Omega_{n} 
+ 4  \epsilon \frac{ \Lambda } {\delta \lambda}\measure( \omega ).
\]
The equivalent estimate holds, even when $\Omega_{n}$ consists of more than  one element.
Choose $ \beta > 1$ such that $2 = \beta^{-1} \lambda$ and so the above estimate reads as 
\[
\cZ_{\epsilon}\Omega_{n+1} \leq
 \beta^{-1} \lambda \cZ_{\epsilon/ \lambda}\Omega_{n} 
+ 4  \epsilon \frac{ \Lambda } {\delta \lambda}\measure( \Omega_{n}  ),
\]
and iterated produces the estimate (since $\sum_{j=0}^{\infty}\beta^{-n} = \frac{\beta}{\beta-{1}}$)
\[
\cZ_{\epsilon}\Omega_{n} \leq
 \beta^{-n} \lambda^{n} \cZ_{\epsilon/ \lambda^{n}}\Omega_{0} 
+   \epsilon \frac{4 \Lambda \beta } {\delta \lambda (\beta -{1})}\measure( \Omega_{0}  ).\qedhere
\]
\end{proof}

The argument will depend crucially on the  three quantities $\cone, \xi, \ctwo>0$.
Let
\begin{equation}
 \label{eq:defconsts}
\cone := \frac{2}{\ln \lambda},
 \quad \quad
 \xi :=  \frac{\ln \beta}{2 \ln \lambda},
 \quad \quad
 \ctwo := \frac{\xi}{2\ln \Lambda},
\end{equation}
and hence let $n_{1}(b) := \left\lceil \cone \ln \abs{b} \right\rceil $, 
$n_{2}(b) := \left\lceil \ctwo \ln \abs{b} \right\rceil $.
Let $\nn(b) := n_{1}(b) + n_{2}(b)$.
For notational simplicity we will often suppress the dependence on $b$ of $\nn, n_{1}, n_{2}$.
Note that $\lambda \beta^{-1} =2$ and so $\ln \beta < \ln \lambda$ and hence $\xi < \frac{1}{2}$.
We use two time scales:
The first  $n_{1}$ iterates are for a small interval of length $\abs{b}^{-(1+\xi)}$ to expand to a reasonable size,
then we take $n_{2} $ iterates to see oscillatory cancelations. 
The argument will also depend on the choice of $b_{0}>0$. As several points during the argument this quantity will be chosen sufficiently large.

Denote by ${\{\Hhh_{\ell}\}}_{\ell}$ the partition of $\base$ into   subintervals 
of equal length
such that 
\begin{equation}
 \label{eq:Hpart}
\abs{b}^{-(1+\xi)} \leq \abs{ \Hhh_{\ell} } \leq 2\abs{b}^{-(1+\xi)}.
\end{equation}
We use this partition to approximate the density $h$. 
Denote by $h_{b}$ the density which is constant on each $\Hhh_{\ell}$ 
and equal to the average value of $h$ on $\Hhh_{\ell}$. 
Note that  $\norm{ h }_{\L{1}{\base}} = \norm{ h_{b} }_{\L{1}{\base}} $.

\begin{lem}\label{lem:approx}
Let  $h\in \mathbf{BV}$,  $b\in \bR$, $\abs{b} \geq b_{0}$ such that  $ 2 \cly (1+\abs{b} )\norm{h}_{\L{1}{\base} } > \norm{h}_{\mathbf{BV}}$ and let $ h_{b}$ be the piecewise constant function as defined in the above paragraph. 
Then
\[
\norm{\smash{h -  h_{b}}}_{\L{1}{\base} }  \leq 4 \cly e^{-\nn\frac{\xi}{2(\cone + \ctwo)}}  \norm{h}_{\L{1}{\base}}.
\]
\end{lem}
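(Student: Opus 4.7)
The proof will be essentially a routine bounded-variation approximation estimate, combined with the hypothesis relating $\norm{h}_{\mathbf{BV}}$ to $\norm{h}_{\L{1}{\base}}$, followed by converting powers of $\abs{b}$ into the claimed exponential in $\nn$ via the definitions of $n_{1}, n_{2}$.

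The plan is as follows. First I would observe that on each piece $\Hhh_{\ell}$ of the partition, the function $h_{b}$ is the constant equal to the mean of $h$ on $\Hhh_{\ell}$, so for a.e.\ $x\in \Hhh_{\ell}$ one has $\abs{h(x) - h_{b}(x)} \leq \operatorname{Var}(h|_{\Hhh_{\ell}})$. Integrating over $\Hhh_{\ell}$ and summing over $\ell$ gives the standard estimate
\[
\norm{h - h_{b}}_{\L{1}{\base}} \leq \max_{\ell}\abs{\Hhh_{\ell}}\cdot \operatorname{Var}(h) \leq 2\abs{b}^{-(1+\xi)}\norm{h}_{\mathbf{BV}},
\]
using the upper bound on the partition sizes from~\eqref{eq:Hpart}.

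Next I would apply the standing hypothesis $\norm{h}_{\mathbf{BV}} \leq 2\cly(1+\abs{b})\norm{h}_{\L{1}{\base}}$ to obtain
\[
\norm{h - h_{b}}_{\L{1}{\base}} \leq 4\cly \, \abs{b}^{-(1+\xi)}(1+\abs{b})\,\norm{h}_{\L{1}{\base}} \leq 8\cly \,\abs{b}^{-\xi}\,\norm{h}_{\L{1}{\base}},
\]
where the last step uses $\abs{b}\geq b_{0}\geq 1$.

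The remaining task is to convert $\abs{b}^{-\xi}$ into a bound in terms of $\nn$ with the stated form. From the definitions $n_{1} = \lceil \cone \ln \abs{b}\rceil$ and $n_{2} = \lceil \ctwo \ln \abs{b}\rceil$ we have $\nn \leq (\cone + \ctwo)\ln \abs{b} + 2$, so
\[
\abs{b}^{-\xi} = e^{-\xi \ln \abs{b}} \leq e^{-\xi(\nn - 2)/(\cone+\ctwo)}.
\]
Choosing $b_{0}$ sufficiently large so that both $8\cly\, e^{2\xi/(\cone+\ctwo)} \leq 4\cly\, e^{\nn \xi/(2(\cone+\ctwo))}$ and the overall constant absorption works (which it does provided $\nn \geq (\cone+\ctwo)\ln b_{0}$ is large enough to give the necessary slack between $e^{-\nn\xi/(\cone+\ctwo)}$ and $e^{-\nn\xi/(2(\cone+\ctwo))}$) yields the stated inequality.

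There is no real obstacle here — the only mildly delicate point is the final bookkeeping to match the exponent $\frac{\xi}{2(\cone+\ctwo)}$ rather than $\frac{\xi}{\cone+\ctwo}$ that naturally drops out, and this factor of one-half is exactly the slack needed to absorb the constants $8\cly$ and the rounding in the ceiling functions, all achieved by choosing $b_{0}$ sufficiently large.
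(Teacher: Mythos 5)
Your proof is correct and follows essentially the same route as the paper: the $\mathbf{BV}$ approximation estimate $\norm{h - h_{b}}_{\L{1}{\base}} \leq 2\abs{b}^{-(1+\xi)}\norm{h}_{\mathbf{BV}}$ on the partition $\{H_\ell\}$, substitution of the standing hypothesis $\norm{h}_{\mathbf{BV}} \leq 2\cly(1+\abs{b})\norm{h}_{\L{1}{\base}}$, and conversion of $\abs{b}^{-\xi}$ into an exponential in $\nn$ via $\nn(b) \leq 2(\cone+\ctwo)\ln\abs{b}$ for $\abs{b} \geq b_0$ large. The one minor point worth noting: the paper's own proof actually concludes with the constant $8\cly$ rather than the $4\cly$ stated in the lemma; you recover the stated $4\cly$ by using the remaining slack in the exponent (the factor-of-two gap between $\frac{\xi}{\cone+\ctwo}$ and $\frac{\xi}{2(\cone+\ctwo)}$) to absorb the extra factor of $2$ for $b_0$ large, which is a valid tidying of the bookkeeping and in no way changes the argument, since only the exponential rate matters downstream.
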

\begin{proof}
Standard approximation results  for $\mathbf{BV}$ and Sobolev functions imply that
$\norm{\smash{h -  h_{b}}}_{\L{1}{\base} }  \leq 2 \abs{b}^{-(1+\xi)} \norm{h}_{\mathbf{BV}}$
since $\abs{\Hhh_{\ell}} \leq 2 \abs{b}^{-(1+\xi)}$.
Substituting the control on $ \norm{h}_{\mathbf{BV}}$ which is assumed we have
\[
\norm{\smash{h -  h_{b}}}_{\L{1}{\base} }  \leq 4 \abs{b}^{-(1+\xi)} \cly (1+\abs{b} )\norm{h}_{\L{1}{\base} }. 
\]
Ensuring that $b_{0}>1$ then $(1+ \abs{b} )\leq 2 \abs{b}$. 
Increasing $b_{0}$ more if required we may assume that
$\nn(b) \leq 2(\cone+\ctwo) \ln \abs{b}$.
This means that $\abs{b}^{-\xi} \leq e^{-n(b)\frac{\xi}{2(\cone + \ctwo)}}$.
Consequently
$\norm{\smash{h -  h_{b}}}_{\L{1}{\base} }  \leq 8 \cly e^{- n(b) \frac{ \xi}{2(\cone + \ctwo)}}$.
\end{proof}

Using Lemma~\ref{lem:approx} we know that in the case $ 2 \cly (1+\abs{b} )\norm{h}_{\L{1}{\base} } > \norm{h}_{\mathbf{BV}}$ then
\[
 \norm{\cL_{b}^{\nn} h}_{\L{1}{\base} }
 \leq 
  \norm{\cL_{b}^{\nn} h_{b}}_{\L{1}{\base} }
  + 
 4 \cly e^{-\nn \frac{\xi}{2(\cone + \ctwo)}}   \norm{h}_{\L{1}{\base}},
\]
since $h_{b} = \sum_{\ell} h_{b} \one_{\ell}$ where $h_{b}$ is constant on each interval $\Hhh_{\ell}$ and that  $\norm{ h }_{\L{1}{\base}} = \norm{ h_{b} }_{\L{1}{\base}} $. 
We now take advantage of the following result. This is the main estimate which takes advantage of the oscillatory cancellation mechanism which is present in this setting.

\begin{prop}\label{prop:mainlem}
There exists $\cm>0$, $\cyes>0$ such that, for all  $\abs{b}\geq b_{0}$ and  $\ell$,
\[
 \norm{\smash{    \cL_{b}^{\nn(b)}\one_{\Hhh_{\ell}}   }}_{\L{1}{\base}}
\leq
\cm e^{-\nn(b) \cyes}
\norm{\one_{\Hhh_{\ell}}}_{\L{1}{\base}}.
\]
\end{prop}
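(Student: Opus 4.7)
The strategy is to decompose $\cL_{b}^{\nn(b)}\one_{\Hhh_{\ell}} = \cL_{b}^{n_{2}}\bigl(\cL_{b}^{n_{1}}\one_{\Hhh_{\ell}}\bigr)$ and to use the two stages for different purposes: the first $n_{1}$ iterates spread the tiny interval $\Hhh_{\ell}$ out into a collection of inverse branches of tractable size, while the final $n_{2}$ iterates extract oscillatory cancellation via the transversality from Section~\ref{sec:trans}. For the first stage, apply the growth construction preceding Lemma~\ref{lem:growth} to $\Omega_{0}:=\Hhh_{\ell}$ to obtain a partition $\Omega_{n_{1}} = \{\omega_{j}\}_{j}$ whose images $f^{n_{1}}\omega_{j}$ have length between $\delta/2$ and $\delta$. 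Writing $g_{j}$ for the inverse branch of $f^{n_{1}}$ on $\omega_{j}$, the intermediate density is
\[
g := \cL_{b}^{n_{1}}\one_{\Hhh_{\ell}} = \sum_{j} \bigl(J_{n_{1}}\,e^{ib\tau_{n_{1}}}\bigr)\circ g_{j}\cdot \one_{f^{n_{1}}\omega_{j}}.
\]
The choice $\cone = 2/\ln\lambda$ gives $\lambda^{n_{1}}\geq \abs{b}^{2}$, which together with $\abs{\Hhh_{\ell}}\leq 2\abs{b}^{-(1+\xi)}$ keeps $\norm{g}_{\L{\infty}{\base}}$ bounded uniformly in $b$.

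For the second stage I would bound $\norm{\cL_{b}^{n_{2}}g}_{\L{1}{\base}}^{2}$ by a constant multiple of $\int \abs{\cL_{b}^{n_{2}}g}^{2}\,d\nu$ and expand the squared modulus as a double sum over pairs $(x_{1},x_{2})\in f^{-n_{2}}(y)^{2}$ with weight
\[
J_{n_{2}}(x_{1})J_{n_{2}}(x_{2})\,g(x_{1})\overline{g(x_{2})}\,e^{ib(\tau_{n_{2}}(x_{1})-\tau_{n_{2}}(x_{2}))}.
\]
The diagonal ($x_{1}=x_{2}$) contributes at most $\norm{J_{n_{2}}}_{\L{\infty}{\base}}\norm{g}_{\L{1}{\base}}\norm{g}_{\L{\infty}{\base}}$, exponentially small in $n_{2}$. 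The off-diagonal part splits into the non-transversal pairs, whose total weight is bounded by $\varphi(n_{2})\leq \cgam e^{-n_{2}\ctransrate}$ via~\eqref{eq:transdecay}, and the transversal pairs. For a transversal pair, Lemma~\ref{lem:transverse} asserts that the $y$-derivative of the phase $b(\tau_{n_{2}}\circ g_{x_{1}}(y)-\tau_{n_{2}}\circ g_{x_{2}}(y))$ has magnitude at least $\abs{b}\ctaup(J_{n_{2}}(x_{1})+J_{n_{2}}(x_{2}))$. A single integration by parts in $y$ on each piece of smoothness therefore gains a factor of order $\abs{b}^{-1}(J_{n_{2}}(x_{1})+J_{n_{2}}(x_{2}))^{-1}\leq \abs{b}^{-1}\Lambda^{n_{2}}$, and the choice $\ctwo = \xi/(2\ln\Lambda)$ turns this into $\abs{b}^{-1+\xi/2}$, which is an honest exponential gain in $n_{2}$.

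The hardest step will be bookkeeping for the boundary terms produced by the integration by parts. The integrand has discontinuities at the $f^{n_{2}}$-images of $\partial\Omega_{n_{1}}$, along with the further discontinuities of $f$ and $\tau$ picked up across the remaining $n_{2}$ iterates. Lemma~\ref{lem:growth} applied to $\Omega_{n_{1}+n_{2}}$ with the natural resolution $\epsilon\sim \abs{b}^{-1}$ is precisely the tool for this: the dynamical term $\beta^{-(n_{1}+n_{2})}\lambda^{n_{1}+n_{2}}\cZ_{\epsilon/\lambda^{n_{1}+n_{2}}}\Hhh_{\ell}$ is small because $\lambda^{n_{1}}\geq \abs{b}^{2}$ already absorbs $\epsilon$, while the second term $\epsilon\,\cgrowth\abs{\Hhh_{\ell}}$ is harmless. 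The three constants $\cone, \xi, \ctwo$ defined in~\eqref{eq:defconsts} are tuned precisely so that the oscillatory gain, combined with the non-transversal contribution $\cgam e^{-n_{2}\ctransrate}$ and the boundary control from Lemma~\ref{lem:growth}, beats all losses and produces a uniform exponential rate $\cyes>0$.
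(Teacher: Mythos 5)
Your plan shares the same three key ingredients as the paper's proof (the growth Lemma~\ref{lem:growth}, the transversality decay~\eqref{eq:transdecay}, and an integration-by-parts lemma exploiting the phase derivative supplied by Lemma~\ref{lem:transverse}), and your Cauchy--Schwarz step producing a double sum over preimages is the same move as the paper's Jensen step producing a double sum over branch pairs. The structural difference is that you work globally ($\norm{\cdot}_{\L{1}{\base}}\le\norm{\cdot}_{\mathbf{L^{2}}}$ on all of $\base$), while the paper first introduces an auxiliary partition $\{I_{p}\}_{p}$ of $\base$ into intervals of length $\approx\abs{b}^{-(1-\xi)}$ (see~\eqref{eq:Ipart}), and applies Cauchy--Schwarz/Jensen separately on each $I_{p}$. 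That localization is not cosmetic: it is exactly what makes the two most delicate steps go through, and it is where your plan has genuine gaps.

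First, the boundary terms. The paper only integrates by parts over the indices $j\in G_{p}$, i.e.\ those branches making a \emph{full crossing} of $I_{p}$, so the amplitude $K_{j,k}$ and phase $\theta_{j,k}$ are $\cC^{1}$ (indeed $\cC^{2}$) on all of $I_{p}$ and the only boundary terms in Lemma~\ref{lem:vandercorput} are at the artificial endpoints of $I_{p}$; the partial-crossing branches $G_{p}^{\complement}$ are estimated \emph{trivially} and the resulting sum is controlled by $\cZ_{\epsilon}\Omega_{n}$ with $\epsilon=\abs{I_{p}}$ via Lemma~\ref{lem:growth} (this is estimate~\eqref{eq:surplus}). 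In your plan you integrate by parts ``on each piece of smoothness'' and then want to use Lemma~\ref{lem:growth} with $\epsilon\sim\abs{b}^{-1}$ to absorb the boundary terms. But Lemma~\ref{lem:growth} bounds a \emph{Lebesgue measure} of points near $\partial\Omega_{n}$, whereas your IBP boundary terms are \emph{pointwise evaluations} of the amplitude at discontinuities, of size roughly $\tfrac{1}{\abs{b}\kappa}\sup\abs{K}$ per discontinuity per branch pair; these are not the same kind of quantity and the growth lemma does not estimate them. The number of discontinuities of $g:=\cL_{b}^{n_{1}}\one_{\Hhh_{\ell}}$ grows polynomially in $\abs{b}$ (roughly $\abs{\Omega}\Lambda^{n_{1}}\sim\abs{b}^{2\ln\Lambda/\ln\lambda-1-\xi}$), so without a different mechanism there is no reason the sum of boundary terms is dominated by the $\abs{b}^{-(1-\xi/2)}$ gain. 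Converting your boundary sum into a measure-type quantity that the growth lemma can see is precisely the role of the $I_{p}$ localization and the $G_{p}$/$G_{p}^{\complement}$ split, and that is missing from your plan.

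Second, the transversality classification. You classify pairs $(x_{1},x_{2})\in f^{-n_{2}}(y)^{2}$ as transversal or not and treat the two sets differently, but that classification depends on $y$. The integration by parts needs a lower bound on $\abs{\theta'}$ over the whole interval of integration, not just at one point. The paper handles this by checking transversality only at a fixed reference $x_{p}\in I_{p}$ (this defines the sets $A_{p}$), and then Lemma~\ref{lem:yepyep} uses the fact that $\abs{I_{p}}$ is small compared to $\Lambda^{-n_{2}}/\ctauq$ (forced by the definitions~\eqref{eq:defconsts} and~\eqref{eq:Ipart}) to propagate the lower bound on $\abs{\theta'}$ from $x_{p}$ to all of $I_{p}$. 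Without introducing a comparable localization you have no stable partition of pairs on which to apply van der Corput. Finally, a small but consequential imprecision: you say $\norm{g}_{\L{\infty}{\base}}$ is ``bounded uniformly in $b$''; what the argument actually requires (and what Lemma~\ref{lem:otherdistortion} delivers) is $\norm{g}_{\L{\infty}{\base}}\lesssim\abs{\Hhh_{\ell}}$, i.e.\ the $\mathbf{L^{\infty}}$ bound shrinks with $\abs{\Omega}$; without that scaling your diagonal and transversal estimates would not produce the required factor $\abs{\Omega}^{2}$.
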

\noindent
 The proof of the above is postponed to Section~\ref{sec:mainestimate}.

Combining Lemma~\ref{lem:approx} and Proposition~\ref{prop:mainlem} we obtain the estimate
\[
\begin{aligned}
\norm{\smash{ \cL_{b}^{\nn(b)} h  }}_{\L{1}{\base} }
 &\leq
\left(  4 \cly e^{-\nn(b)\frac{\xi}{2(\cone + \ctwo)}}
+
\cm e^{-\nn(b) \cyes}
\right)
\norm{h}_{\L{1}{\base}}\\
& \leq
 \cother e^{-\nn(b) \cseven} 
\norm{h}_{\L{1}{\base}}
\end{aligned}
\]
where $\cseven:= \min(\frac{\xi}{2(\cone + \ctwo)},   \cyes)$
and $\cother:= 4 \cly + \cm$.
%(Where we used that $\nq = \frac{\ctwo}{\ctwo + \cthree} (\nq + \nn)$ and $\nn = \frac{\cthree}{\ctwo + \cthree} (\nq + \nn)$.)
We now substitute these estimates into \eqref{eq:iterate}.
\[
\begin{aligned}
\norm{\smash{ \cL_{b}^{2\nn(b)} h }}_{(b)}
&\leq
\left(    
2 \cly^{2} \lambda^{-\nn(b)}
+  \cly \cother e^{-\nn(b) \cseven}
\right)
\norm{ h}_{(b)}\\
&\leq
\cly(2\cly + \cother) e^{-\nn(b) \ceight}
\norm{ h}_{(b)},
\end{aligned}
\]
where $\ceight := \min( \ln \lambda, \cseven)$.
To complete the proof of Proposition~\ref{prop:mainuse} we must combine the above estimate with Lemma~\ref{lem:easy}.
We choose $b_{0}>0$ sufficiently large such that
\[
\norm{\smash{ \cL_{b}^{2\nn(b)} h }}_{(b)}
\leq
 e^{-\nn(b) \frac{\ceight}{2}}
\norm{ h}_{(b)}
\]
for all $\abs{b} \geq b_{0}$.
Note that the estimate of Lemma~\ref{lem:easy} cannot be simply iterated since the assumption of the estimate is not invariant. However we can argue as follows: Either the estimate can be interated or the above estimate applies. Consequently we obtain the exponential rate as required and complete the proof of Proposition~\ref{prop:mainuse}.

\section{The Main Estimate}
\label{sec:mainestimate}

This section is devoted to the proof of Proposition~\ref{prop:mainlem} which was stated in Section~\ref{sec:preparing}.
In order to prove this proposition we must estimate $ \norm{\smash{    \cL_{b}^{\nn(b)}\one_{\Omega}   }}_{\L{1}{\base}}$ where $\Omega$ is an interval such that $\abs{b}^{-(1+\xi)} \leq \abs{ \Omega} \leq 2\abs{b}^{-(1+\xi)}$.
Let $\Omega_{0} = \Omega$ and, using the notation of Lemma~\ref{lem:growth},
denote by $ {\{\omega_{j}\}}_{j}$ the connected components of  $\Omega_{n}$.
Let $h_{j} := \left. f^{n(b)} \right|_{\omega_{j}}^{-1}$.
Note that  $\norm{\one_{\Omega}}_{\L{1}{\base}} =\abs{{\Omega} }$. 
We must estimate
\begin{equation}
\label{eq:termtoestimate}
 \norm{\cL_{b}^{n}\one_{\Omega}}_{\L{1}{\base}}
=
\int_{\base}   \left| \vphantom{\sum } \smash{  \sum_{j}  \left(  { J_{n}}   \cdot e^{ib\tau_{n} }  \right)  \circ h_{j}(z)  \cdot \one_{f^{n}\omega_{j}}}(z) \right| \ dz.
\end{equation}
 Introduce a partition of $\base$ into equal sized subintervals ${\{I_{p}\}}_{p}$ such that 
\begin{equation}
 \label{eq:Ipart}
\abs{b}^{-(1-\xi)}
 \leq 
  \abs{I_{p}} 
  \leq 2 \abs{b}^{-(1-\xi)}. 
\end{equation}
For each $p$, fix some $y_{p} \in I_{p}$ as a reference.
To proceed we would like to ensure that the subintervals $f^{n}\omega_{j}$ make full crossings of the intervals $I_{p}$.
For each $p$ let $G_{p}$ denote the set of indexes $j$ such that $f^{n}\omega_{j} \supset I_{p}$.
Let $G_{p}^{\complement}$ denote the complement of $G_{p}$.
The integrals associated to indexes in the set $G_{p}^{\complement}$  are estimated as follows.
  \[
   \sum_{p}
\int_{I_{p}}   \left| \vphantom{\sum } \smash{  \sum_{j \in G_{p}^{\complement}}  \left(  { J_{n}}  \cdot e^{ib\tau_{n} } \right)  \circ h_{j}(z)  \cdot \one_{f^{n}\omega_{j}}}(z) \right| \ dz 
  \leq
   \sum_{p}  \sum_{j \in G_{p}^{\complement}}
  \abs{    \omega_{j} \cap f^{-n}I_{p} }.
\]
That $j \in G_{p}^{\complement}$
implies that one of the end points of $f^{n}\omega_{j}$
is contained within $I_{p}$. 
  Consequently $ \omega_{j} \cap f^{-n}I_{p} $ is contained within the set
  $\{ x\in \Omega_{n} : r_{n}(x) < \epsilon \}$
  where $\epsilon = \abs{I_{p}}  \leq 2 \abs{b}^{-(1-\xi)}$.
  This means that
  \[
     \sum_{p}  \sum_{j \in G_{p}^{\complement}}
  \abs{    \omega_{j} \cap f^{-n}I_{p} }
  \leq
   \cZ_{\epsilon}\Omega_{n}.
  \]
  Applying the estimate of Lemma~\ref{lem:growth} gives a bound of
\[
\begin{aligned}
   \cZ_{\epsilon}\Omega_{n}
   &\leq
   \beta^{-n}\lambda^{n}  \frac{2\epsilon}{\lambda^{n}} + 
     \epsilon \cgrowth \abs{ \Omega  }\\
     &\leq
     8 \abs{b}^{-(1+\xi)}    \left(  e^{-n \ln \beta}e^{n \frac{2\xi}{\cone + \ctwo}} + 2 \cgrowth  e^{-n \frac{1-\xi}{\cone + \ctwo}} \right).
\end{aligned}
\]
Recalling the definitions of  $\xi$ and $\cone$,  note that $ \frac{2\xi}{\cone + \ctwo} < \frac{2\xi}{\cone} = \xi \ln \lambda$ and so 
$  \ln \beta- \frac{2\xi}{\cone + \ctwo} >  \ln \beta-   \xi \ln \lambda > 0$.
Let $\csurplus := \min( \ln \beta- \frac{2\xi}{\cone + \ctwo}, \frac{1-\xi}{\cone + \ctwo}   ) >0$,
$\csurminus:= 8(1+ 2 \cgrowth )$.
  This means that
  \begin{equation}
  \label{eq:surplus}
    \sum_{p}
\int_{I_{p}}   \left| \vphantom{\sum } \smash{  \sum_{j \in G_{p}^{\complement}}  \left(  { J_{n}}  \cdot e^{ib\tau_{n} } \right)  \circ h_{j}(z)  \cdot \one_{f^{n}\omega_{j}}}(z) \right| \ dz 
  \leq
\abs{\Omega} \csurminus e^{-\csurplus n}.
  \end{equation}

Now we may proceed to estimate \eqref{eq:termtoestimate}  summing only over the indexes $j \in G_{p}$.
Since $\abs{\sum_{k} a_{k}}^{2} =   \sum_{jk} a_{j}\overline{a_{k}} $, using also Jensen's inequality, we have 
\begin{equation}\label{eq:jensen}
\begin{aligned}
\sum_{p} \int_{I_{p}}   \left|  \vphantom{\sum } \smash{  \sum_{j\in G_{p}} }
\left( { J_{n}}  \cdot  e^{ib\tau_{n} }\right)
\circ h_{j}(z) \right| \ dz
&= 
\sum_{p} \int_{I_{p}}   \left( \sum_{j,k \in G_{p}} (K_{j,k} \cdot e^{ib \theta_{j,k} })(z) \right)^{\frac{1}{2}} \ dz\\
&\leq 
  \left( \sum_{p}  \sum_{j,k \in G_{p}} \left|  \int_{I_{p}} (K_{j,k} \cdot   e^{ib\theta_{j,k} })(z)    \ dz \right| \right)^{\frac{1}{2}}\\
\end{aligned}
\end{equation}
where $K_{j,k}:= { J_{n}}\circ h_{j} \cdot  { J_{n}}\circ h_{k}$ and we define the following crucial quantity related to the phase difference between different preimages of the same point:
\[
 \theta_{j,k}(x) := \left(  \tau_{n} \circ h_{j  } - \tau_{n} \circ h_{k }  \right)(x).
\]

\begin{lem}
 \label{lem:estimateJp}
 There exists $\cjp >0$ such that 
 $J_{n}'(x) \leq \cjp$ for all $x \in \bT^{1}$, $n\in \bN$.
\end{lem}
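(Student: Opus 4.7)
The plan is to compute $J_n'$ explicitly using the chain rule and then bound the result via the uniform expansion of $f$, exactly as in the classical distortion argument.

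First I would write $J_n(x) = \prod_{i=0}^{n-1} (f' \circ f^i(x))^{-1}$, take the logarithmic derivative, and obtain
\[
J_n'(x) = - J_n(x) \cdot \sum_{i=0}^{n-1} \frac{f'' \circ f^i(x)}{f'\circ f^i(x)} \cdot (f^i)'(x).
\]
Next I would use the identity $J_n(x) \cdot (f^i)'(x) = J_{n-i}(f^i x)$, which follows immediately from the chain rule $(f^n)'(x) = (f^{n-i})'(f^i x) \cdot (f^i)'(x)$. Substituting gives
\[
\abs{J_n'(x)} \leq \frac{\norm{f''}_{\L{\infty}{\base}}}{\inf \abs{f'}} \sum_{i=0}^{n-1} J_{n-i}(f^i x).
\]

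The last step is the geometric estimate: the uniform expansion $\inf f' = \tilde\lambda > 2$ implies $J_{n-i}(y) \leq \tilde\lambda^{-(n-i)}$ for every $y$, so the sum is dominated by the convergent series $\sum_{k\geq 1} \tilde\lambda^{-k} = (\tilde\lambda - 1)^{-1}$. Setting
\[
\cjp := \frac{\norm{f''}_{\L{\infty}{\base}}}{\inf \abs{f'} \cdot (\tilde\lambda -1)}
\]
yields the claimed uniform bound, independently of $n$.

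There is no real obstacle here; one only needs to note that $f$ is piecewise $\cC^{2}$ with $\cC^2$ extensions to the closures of its intervals of smoothness, so $\norm{f''}_{\L{\infty}{\base}}$ is finite, and that the derivative $J_n'$ is to be understood branchwise, i.e., on each interval of smoothness of $f^n$ (on which the above chain rule calculation is valid). The bound then holds uniformly on every such branch, which is exactly the content of the lemma.
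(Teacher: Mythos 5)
Your proof is correct and follows essentially the same route as the paper's: differentiate the product $J_n=\prod_{i}(f'\circ f^{i})^{-1}$, use the chain-rule identity $J_{n}\cdot(f^{i})' = J_{n-i}\circ f^{i}$ to rewrite each term, and then sum the resulting geometric series via uniform expansion. Your constant $\norm{f''}_{\L{\infty}{\base}}/(\inf\abs{f'}\,(\tilde\lambda-1))$ is in fact slightly sharper than the paper's $\sup\abs{f''}/(\lambda-1)$, which simply discards the extra $\inf\abs{f'}$ factor; the extra remark that $J_{n}'$ is taken branchwise is the right reading and matches the paper's implicit convention.
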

\begin{proof}
 Note that
$ J_{n} = \prod_{j=0}^{n-1} \frac{1}{f'}\circ f^{j}  $. 
Consequently
$J_{n}' = \sum_{j=0}^{n-1} \frac{f''}{f'} \circ f^{j} \cdot J_{n-j}\circ f^{j}$.
And so $\abs{J_{n}'} \leq \sup \abs{f''} /(\lambda -1)$ for any $n\in \bN$.
\end{proof}

\begin{lem}\label{lem:taupprime}
There exists $\ctauq>0$, independent of $n \in \bN$, such that 
$
\abs{\smash{\theta_{j,k}''}} \leq \ctauq$.
\end{lem}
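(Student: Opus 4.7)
The plan is to bound $(\tau_{n}\circ h_{j})''$ uniformly in $n$ and in the choice of inverse branch, and then conclude by the triangle inequality applied to $\theta_{j,k} = \tau_{n}\circ h_{j} - \tau_{n}\circ h_{k}$. The main thing to exploit is the geometric decay coming from the uniform expansion, which makes the telescoping sum for $\tau_{n}\circ h_{j}$ behave like an (absolutely convergent) geometric series even though $\tau_{n}'$ and $\tau_{n}''$ individually blow up with $n$.

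Concretely, I would write $\tau_{n}\circ h_{j} = \sum_{i=0}^{n-1} \tau\circ \tilde h_{i}$ where $\tilde h_{i} := f^{i}\circ h_{j}$ is (on the interval where $h_{j}$ is defined) the inverse branch of $f^{n-i}$ determined by $h_{j}$. Then the chain rule gives
\[
(\tau\circ \tilde h_{i})''
= (\tau''\circ \tilde h_{i})\cdot (\tilde h_{i}')^{2}
+ (\tau'\circ \tilde h_{i})\cdot \tilde h_{i}''.
\]
Since $\tilde h_{i}$ is an inverse branch of $f^{n-i}$, one has $\tilde h_{i}' = J_{n-i}\circ \tilde h_{i}$, so $|\tilde h_{i}'|\le \tilde\lambda^{-(n-i)}$, and differentiating once more and using Lemma~\ref{lem:estimateJp} yields
\[
\tilde h_{i}'' = (J_{n-i}'\circ \tilde h_{i})\cdot \tilde h_{i}',
\qquad
|\tilde h_{i}''| \le \cjp\,\tilde\lambda^{-(n-i)}.
\]

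Substituting these bounds gives $\bigl|(\tau\circ \tilde h_{i})''\bigr|\le \sup|\tau''|\,\tilde\lambda^{-2(n-i)} + \cjp \sup|\tau'|\,\tilde\lambda^{-(n-i)}$. Summing $i=0,\dots,n-1$ is a geometric series bounded by $\sup|\tau''|\cdot (1-\tilde\lambda^{-2})^{-1} + \cjp \sup|\tau'|\cdot(\tilde\lambda-1)^{-1}$, a quantity $M$ independent of $n$ and of the branch $h_{j}$. Applying the same estimate to $h_{k}$ and the triangle inequality gives $|\theta_{j,k}''|\le 2M$, so we can take $\ctauq := 2M$.

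The only subtle point to check is that inside each interval $I_{p}$ on which we care about $\theta_{j,k}$, the branches $h_{j}$, $h_{k}$ and all their iterates $\tilde h_{i}$ avoid the discontinuities of $f$ and $\tau$, so that the chain rule computation above is genuinely legitimate; this is built into the construction of the $\omega_{j}$'s via the partition $\Omega_{n}$ of Lemma~\ref{lem:growth}, on whose elements $f^{n}$ is $\cC^{2}$, and of the partition ${\{I_{p}\}}_{p}$, on which $\tau$ is $\cC^{2}$ on each piece pulled back by $\tilde h_{i}$. No further obstacle arises: the proof is essentially a distortion-style geometric-sum estimate.
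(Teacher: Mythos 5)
Your proof is correct and follows essentially the same route as the paper: both decompose $\tau_{n}\circ h_{j}$ as a sum $\sum_{i}\tau\circ(f^{i}\circ h_{j})$, differentiate twice using the chain rule, invoke Lemma~\ref{lem:estimateJp} for the bound on $J'$, and sum a geometric series coming from the uniform expansion. Your version is slightly more explicit about the geometric series and about the (genuine but routine) need for the inverse branches to stay within pieces of smoothness, but the argument is the same.
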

\begin{proof}
Suppose that $g: \omega \to \base$ such that
$g \circ f^{n} = \mathbf{id}$.
Let  $g^{(j)} := f^{n-j} \circ g$.
Note that
\[
(\tau_{n}\circ g)' = \sum_{j=0}^{n-1} (\tau' \cdot J_{j})\circ g^{(j)}.
\]
Consequently 
\[
(\tau_{n} \circ g)'' =
 \sum_{j=0}^{n-1} 
 \left(  J_{j}^{2}\cdot \tau'' + \tau' \cdot J_{j}' \cdot J_{j}    \right)\circ g^{(j)}.
\]
By Lemma~\ref{lem:estimateJp} we know that $J_{n}' \leq \cjp$.
Since $\tau$ is $\cC^{2}$ and $J_{n} \leq \lambda^{n}$  the above term is uniformly bounded for any $n\in \bN$.
\end{proof}

Let $g_{j} := f^{n_{1}} \circ h_{j}$.
For each $p$ let $A_{p}$ denote the set of denote the set of pairs $(j,k) \in G_{p} \times G_{p}$ such that $g_{j}(x_{p}) \pitchfork g_{k}(x_{p})$ (this is the case where we see oscillatory cancelations since the two preimages are transversal at iterate $n_{2}$). 
Let $A_{p}^{\complement}$ denote the complement set, i.e., the set of pairs $(j,k)$ such that $g_{j}(x_{p}) \not\pitchfork g_{k}(x_{p})$.

\begin{equation}
\label{eq:thetwopieces}
\begin{aligned}
\sum_{j,k \in G_{p} } \left|  \int_{I_{p}}   (K_{j,k} \cdot   e^{ib\theta_{j,k} })(z)   \ dz \right|
&\leq   \sum_{j}  \sum_{k: (j,k) \in A_{p}} \left|  \int_{I_{p}}   (K_{j,k} \cdot   e^{ib\theta_{j,k} })(z)    \ dz \right|  \\
  & \ \ +   \sum_{j}   \sum_{ k : (j,k) \in A_{p}^{\complement} }  \int_{I_{p}}  K_{j,k}(z)     \ dz.
\end{aligned}
\end{equation}

Before estimating the above it is convenient to give the following distortion estimates.

\begin{lem}
 \label{lem:estK}
$K_{j,k}' \leq 2 \cjp K_{j,k}$.
\end{lem}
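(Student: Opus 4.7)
The plan is to compute $K_{j,k}'$ directly by the product and chain rules, then exploit the fact that the derivative of an inverse branch of $f^n$ is precisely $J_n$ composed with that branch. Since $h_j$ is an inverse branch of $f^n$, differentiating the identity $f^n \circ h_j = \mathbf{id}$ gives $h_j'(z) = J_n(h_j(z))$, and likewise for $h_k$. This is the key algebraic observation that makes the computation collapse nicely.

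Concretely, I would write
\[
K_{j,k}'(z) = J_n'(h_j(z))\, h_j'(z)\, J_n(h_k(z)) + J_n(h_j(z))\, J_n'(h_k(z))\, h_k'(z),
\]
and substitute $h_j'(z) = J_n(h_j(z))$, $h_k'(z) = J_n(h_k(z))$, obtaining
\[
K_{j,k}'(z) = \bigl[J_n'(h_j(z)) + J_n'(h_k(z))\bigr]\, K_{j,k}(z).
\]
At this point the bound $|J_n'| \leq \cjp$ from Lemma~\ref{lem:estimateJp} immediately yields $K_{j,k}'(z) \leq 2\cjp K_{j,k}(z)$, as desired.

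There is essentially no obstacle here; it is a short distortion-style computation whose only ingredient beyond the chain rule is the already-established uniform bound on $J_n'$. The point of stating it as a separate lemma is presumably that it will be used repeatedly in what follows (e.g.\ to approximate $K_{j,k}$ by its value at the reference point $y_p$ on each interval $I_p$, picking up a multiplicative distortion factor close to $1$ since $|I_p| \leq 2|b|^{-(1-\xi)}$ is small). The only minor care needed is noting that $K_{j,k}$ is nonnegative (being a product of Jacobians), so there is no sign issue in the inequality.
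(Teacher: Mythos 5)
Your proof is correct and follows essentially the same route as the paper's: the product and chain rules together with the identity $h_j' = J_n\circ h_j$ (valid since $f'>2>0$, so $J_n = 1/(f^n)'$) give $K_{j,k}' = K_{j,k}\bigl(J_n'\circ h_j + J_n'\circ h_k\bigr)$, and Lemma~\ref{lem:estimateJp} finishes it. The paper states the intermediate formula without the explicit substitution, but the underlying computation is identical.
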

\begin{proof}
 Differentiating we obtain
 $ K_{j,k}' = K_{j,k} \left( J_{m}'\circ h_{j} + J_{m}'\circ h_{k}   \right)   $.
By Lemma~\ref{lem:estimateJp} we know that $J_{n}' \leq \cjp$.
\end{proof}

 Recall that  that $\abs{b}^{-(1+\xi)} \leq \abs{ \Omega} \leq 2\abs{b}^{-(1+\xi)}$ and 
 $n_{1} = \lceil \cone \ln \abs{b} \rceil$.
\begin{lem}
\label{lem:otherdistortion}
 For all $y\in \base$,
\[
\sum_{x \in f^{-n_{1}}(y) \cap \Omega} J_{n_{1}}(x) \leq 6 \cly \abs{\Omega}.
\]
\end{lem}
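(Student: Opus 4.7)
The plan is to recognise the sum on the left as the value at $y$ of the untwisted transfer operator $\cL_{0}^{n_{1}}$ applied to the indicator function $\one_{\Omega}$, and then to control that value pointwise via the Lasota--Yorke estimate of Lemma~\ref{lem:LY} at $b=0$ combined with the continuous embedding $\mathbf{BV}\hookrightarrow \mathbf{L^{\infty}}$. Indeed, by the very definition of $\cL_{b}^{n}$ in \eqref{eq:twisttrans},
\[
\sum_{x\in f^{-n_{1}}(y)\cap \Omega} J_{n_{1}}(x) = \left(\cL_{0}^{n_{1}}\one_{\Omega}\right)(y),
\]
so the desired inequality becomes $\norm{\cL_{0}^{n_{1}}\one_{\Omega}}_{\mathbf{L^{\infty}}}\leq 6\cly \abs{\Omega}$.

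For any nonnegative $g\in\mathbf{BV}(\base)$ the mean value of $g$ over $\base$ is at most $\norm{g}_{\L{1}{\base}}$, and every value of $g$ differs from this mean by at most $\operatorname{Var}(g)$; hence $\norm{g}_{\mathbf{L^{\infty}}}\leq \norm{g}_{\mathbf{BV}}$. Applying this to $g=\cL_{0}^{n_{1}}\one_{\Omega}$ together with Lemma~\ref{lem:LY} at $b=0$, using $\operatorname{Var}(\one_{\Omega})=2$ and $\norm{\one_{\Omega}}_{\L{1}{\base}}=\abs{\Omega}$, yields
\[
\left(\cL_{0}^{n_{1}}\one_{\Omega}\right)(y)\leq \cly\lambda^{-n_{1}}(2+\abs{\Omega})+\cly\abs{\Omega}.
\]
It then remains to verify $\lambda^{-n_{1}}(2+\abs{\Omega})\leq 5\abs{\Omega}$.

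For this final step we use the explicit choice $\cone=2/\ln\lambda$ from \eqref{eq:defconsts}, which via $n_{1}\geq \cone\ln\abs{b}$ gives $\lambda^{-n_{1}}\leq \abs{b}^{-2}$, and the hypothesis $\abs{\Omega}\geq \abs{b}^{-(1+\xi)}$ together with $\xi<\frac{1}{2}$. Then $\lambda^{-n_{1}}\abs{\Omega}\leq \abs{b}^{-2}\abs{\Omega}\leq \abs{\Omega}$, while $2\lambda^{-n_{1}}\leq 2\abs{b}^{-2}\leq 2\abs{b}^{-(1-\xi)}\abs{\Omega}$, which is at most $4\abs{\Omega}$ provided $b_{0}$ is taken large enough to ensure $2\abs{b_{0}}^{-(1-\xi)}\leq 4$. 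Adding the two contributions gives $5\abs{\Omega}$, and multiplying by $\cly$ and adding $\cly\abs{\Omega}$ recovers the claimed bound $6\cly\abs{\Omega}$.

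There is no real obstacle here: the statement is essentially a bounded-distortion estimate dressed up in transfer-operator language, and everything follows from Lemma~\ref{lem:LY} once one has chosen $n_{1}$ large enough (in terms of $\abs{b}$) for the $\mathbf{BV}$ part of the Lasota--Yorke inequality to have decayed below the $\mathbf{L^{1}}$ scale $\abs{\Omega}$. The only care needed is in tracking the numerical constants so as to get exactly the factor $6\cly$.
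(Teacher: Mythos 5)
Your proof is correct and follows essentially the same route as the paper's: recognise the sum as $(\cL_{0}^{n_{1}}\one_{\Omega})(y)$, apply the Lasota--Yorke estimate of Lemma~\ref{lem:LY} at $b=0$, pass from $\mathbf{BV}$ to $\mathbf{L^{\infty}}$, and use $\lambda^{-n_{1}}\leq\abs{b}^{-2}$ together with $\abs{b}^{-(1+\xi)}\leq\abs{\Omega}$. The only cosmetic difference is that you use the sharper bound $\norm{g}_{\mathbf{L^{\infty}}}\leq\norm{g}_{\mathbf{BV}}$ valid for nonnegative $g$ (noting $\cL_{0}^{n_{1}}\one_{\Omega}\geq 0$), whereas the paper uses the cruder $\norm{\cdot}_{\mathbf{L^{\infty}}}\leq 2\norm{\cdot}_{\mathbf{BV}}$ and a slightly looser bookkeeping of $\norm{\one_{\Omega}}_{\mathbf{BV}}$; both land on the same constant $6\cly$.
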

\begin{proof}
First note that
 \[
  \sum_{x \in f^{-n_{1}}(y) \cap \Omega} J_{n_{1}}(x) 
  =
    \sum_{x \in f^{-n_{1}}(y) } J_{n_{1}}(x) \cdot \one_{\Omega}
    = (\cL_{0}^{n_{1}}  \one_{\Omega} )(y),
 \]
 and so it suffices to estimate $\norm{  \cL_{0}^{n_{1}}  \one_{\Omega} }_{\L{\infty}{\base}}$.
 In one dimension $\norm{  \cdot }_{\L{\infty}{\base}} \leq 2 \norm{  \cdot }_{\mathbf{BV}}$.
 Moreover $ \norm{   \one_{\Omega} }_{\L{1}{\base}} = \abs{\Omega}$ and 
 $ \norm{   \one_{\Omega} }_{\mathbf{BV}} = 2$.
 So, with the help of the estimate from Lemma~\ref{lem:LY},
 \[
  \norm{  \cL_{0}^{n_{1}}  \one_{\Omega} }_{\L{\infty}{\base}}
  \leq 2(2 \cly \lambda^{-n_{1}} + \cly \abs{\Omega}).
 \]
 Note that $\lambda^{-n_{1}} = \abs{b}^{-2}$ since $n_{1} \geq \cone  \ln \abs{b}$ and $\cone = 2 / \ln \lambda$.
 Consequently ($\xi \leq \frac{1}{2}$) the above quantity is bounded by $6 \cly \abs{b}^{-(1+\xi)}$.
 \end{proof}
In a similar way to the above
$\sum_{x \in f^{-n}(y)} J_{n}(x) 
    = (\cL_{0}^{n}  \one )(y)$ for all $y\in \bT^{1}$, $n\in \bN$.
    We may again apply the estimate from Lemma~\ref{lem:LY}
and so
\begin{equation}
\label{eq:distortion}
\sum_{x \in f^{-n}(y)} J_{n}(x) \leq \cly.
\end{equation}
 By Lemma~\ref{lem:estK} we know that
$K_{j,k}' \leq 2 \cjp K_{j,k}$.
Hence, by Gronwall's inequality, 
\begin{equation}
 \label{eq:gronwall}
 K_{j,k}(z) \leq e^{2 \cjp \abs{I_{p}}} K_{j,k}(x_{p})
\end{equation}
 for all $z\in I_{p}$.
 Choosing $b_{0}$ large insures that $\abs{I_{p}}$ is small and so
\begin{equation*}
  \int_{I_{p}}  K_{j,k}(z)    \ dz  
  \leq 
2
\abs{{I_{p}}}  K_{j,k}(x_{p}).
\end{equation*}
Now we consider the sum in~\eqref{eq:thetwopieces} corresponding to the non-cancelling pairs (this is the second of the two terms on the right hand side).
Using the above estimates
\[
\sum_{p}  \sum_{j}   \sum_{ k : (j,k) \in A_{p}^{\complement} }  \int_{I_{p}}  K_{j,k}(z)     \ dz
  \leq
     \sum_{j}   \sum_{ k : (j,k) \in A_{p}^{\complement} }   K_{j,k}(x_{p})     \ dz.
\]
Note that
\[
\sum_{j} J_{n} \circ h_{j}(x_{p}) 
\leq
\sum_{y \in f^{-n}(x_{p}) \cap \Omega} J_{n}(y)
\leq
\sum_{z \in f^{-n_{2}}(x_{p}) } J_{n_{2}}(z)
\sum_{y \in f^{-n_{1}}(z) \cap \Omega} J_{n_{1}}(y)
\]
where $n_{1} = \cone \ln \abs{b}$,  $n_{2} = \ctwo \ln \abs{b}$.
Using also the estimate of Lemma~\ref{lem:otherdistortion}
\begin{equation}
\label{eq:toshow}
\sum_{j} J_{n} \circ h_{j}(x_{p}) 
\leq
6 \cly  \abs{\Omega}
\sum_{z \in f^{-n_{2}}(x_{p}) } J_{n_{2}}(z).
\end{equation}
Using the above estimates, together with \eqref{eq:distortion} and \eqref{eq:transdecay},
\begin{equation}
\label{eq:estimate2}
\begin{aligned}
\sum_{p}\sum_{j} \sum_{k: (j,k) \in A_{p}^{\complement}}  \int_{I_{p}} K_{j,k}(z)    \ dz  
& \leq
(6 \cly  )^{2} \abs{\Omega}^{2} 
\cly
%\sum_{z_{1} \in f^{-n_{2}}(x_{p}) } J_{n_{2}}(z_{1}) 
\sum_{\substack{z_{2} \in f^{-n_{2}}(x_{p})\\ z_{1} \not\pitchfork z_{2}} } J_{n_{2}}(z_{2})\\
& \leq (6 \cly )^{2}  \abs{\Omega}^{2} \cly \cgam e^{-n_{2}\ctransrate}.
\end{aligned}
\end{equation}

Let us now consider the case where 
$(j,k) \in A_{p}$
and so estimate the remaining term of \eqref{eq:thetwopieces}.
\begin{lem}\label{lem:yep}
Suppose that $(j,k) \in A_{p}$. Then 
\[
\abs{\smash{\theta_{j,k}'(x_{p})}} > \tfrac{1}{2}  \ctaup (J_{n_{2}} \circ g_{j} + J_{n_{2}} \circ g_{k})(x_{p}).
\]
\end{lem}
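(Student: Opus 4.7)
The plan is to unfold the cocycle $\tau_n$ as $\tau_{n_1} + \tau_{n_2} \circ f^{n_1}$ so that the ``fast'' part (first $n_1$ iterates, whose transversality contribution is small) is separated from the ``slow'' part (last $n_2$ iterates, where the transversality hypothesis lives). Precisely, since $h_j$ is an inverse branch of $f^n$, one has $h_j' = J_n \circ h_j$ and hence
\[
\theta_{j,k}'(x) = (\tau_n' \cdot J_n)\circ h_j(x) - (\tau_n' \cdot J_n)\circ h_k(x).
\]
Writing $\tau_n = \tau_{n_1} + \tau_{n_2}\circ f^{n_1}$, applying the chain rule and using $(f^{n_1})' \cdot J_{n_1} = 1$ together with $g_j = f^{n_1}\circ h_j$, I would verify the identity
\[
(\tau_n'\cdot J_n)\circ h_j(x) = (\tau_{n_1}'\cdot J_{n_1})\circ h_j(x)\cdot J_{n_2}\circ g_j(x) + (\tau_{n_2}'\cdot J_{n_2})\circ g_j(x),
\]
with the analogous formula for the index $k$.

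Next I would apply the transversality lemma (Lemma~\ref{lem:transverse}) to the two preimages $g_j(x_p), g_k(x_p)$ of $x_p$ under $f^{n_2}$ — which is precisely what the condition $(j,k)\in A_p$ says — to get the lower bound
\[
\abs{(\tau_{n_2}'\cdot J_{n_2})\circ g_j(x_p) - (\tau_{n_2}'\cdot J_{n_2})\circ g_k(x_p)} > \ctaup\bigl(J_{n_2}\circ g_j + J_{n_2}\circ g_k\bigr)(x_p).
\]
For the cross terms $(\tau_{n_1}'\cdot J_{n_1})\circ h_j \cdot J_{n_2}\circ g_j$, Lemma~\ref{lem:tauprime} bounds $\abs{\tau_{n_1}'\cdot J_{n_1}}$ by $\tfrac{1}{2}\ctaup$ uniformly, so their contribution is controlled by $\tfrac{1}{2}\ctaup(J_{n_2}\circ g_j + J_{n_2}\circ g_k)(x_p)$.

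Finally I would conclude by the reverse triangle inequality: the lower bound on the $n_2$-block exceeds $\ctaup(J_{n_2}\circ g_j + J_{n_2}\circ g_k)(x_p)$, while the $n_1$-cross terms perturb this by at most $\tfrac{1}{2}\ctaup(J_{n_2}\circ g_j + J_{n_2}\circ g_k)(x_p)$, leaving a surplus of $\tfrac{1}{2}\ctaup(J_{n_2}\circ g_j + J_{n_2}\circ g_k)(x_p)$ for $\abs{\theta_{j,k}'(x_p)}$, as required. There is no genuine obstacle here; the only subtlety is getting the cocycle decomposition right and checking that $g_j(x_p), g_k(x_p)$ are bona fide $n_2$-preimages of $x_p$ so that Lemma~\ref{lem:transverse} applies cleanly.
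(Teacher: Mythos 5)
Your proof is correct and follows the same route as the paper: decompose $\tau_n = \tau_{n_1} + \tau_{n_2}\circ f^{n_1}$, bound the $n_1$-block via Lemma~\ref{lem:tauprime} (after extracting the common factor $J_{n_2}\circ g_{j/k}$), apply Lemma~\ref{lem:transverse} to the transversal $f^{n_2}$-preimages $g_j(x_p), g_k(x_p)$, and finish with the reverse triangle inequality. The only cosmetic difference is that the paper leaves the $n_1$-terms written as $(\tau_{n_1}'\cdot J_n)\circ h_{j/k}$ rather than factoring out $J_{n_2}\circ g_{j/k}$ explicitly, but these are identical.
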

\begin{proof}
Differentiating, since $\tau_{n}\circ h_{j} = \tau_{n_{1}}\circ h_{j} + \tau_{n_{2}}\circ f^{n_1} \circ h_{j}$, we obtain
\begin{equation*}
 \theta_{j,k}'
 =
 (\tau_{n_{1}}' \cdot J_{n})\circ h_{j} - ( \tau_{n_{1}}' \cdot J_{n}) \circ h_{k}
 + (\tau_{n_{2}}'\cdot J_{n_{2}}) \circ g_{j}   - ( \tau_{n_{2}}' \cdot J_{n_{2}}) \circ g_{k}. 
\end{equation*}
Applying the estimate of Lemma~\ref{lem:tauprime} means that the first two terms can be estimated as
\[
\abs{(\tau_{n_{1}}' \cdot J_{n})\circ h_{j} - ( \tau_{n_{1}}' \cdot J_{n}) \circ h_{k})  }
\leq
 \tfrac{1}{2}  \ctaup (J_{n_{2}} \circ g_{j} + J_{n_{2}} \circ g_{k}).
\]
Using the estimate of Lemma~\ref{lem:transverse} we have that
\[
\abs{(\tau_{n_{2}}'\cdot J_{n_{2}}) \circ g_{j}   - ( \tau_{n_{2}}' \cdot J_{n_{2}}) \circ g_{k} }
>
  \ctaup (J_{n_{2}} \circ g_{j} + J_{n_{2}} \circ g_{k}).\qedhere
\]
\end{proof}

The above lemma says that we have the required transversality at the point $x_{p}$. The following lemma says that the interval $I_{p}$ has been chosen sufficiently small such that this same transversality holds for the entire interval $I_{p}$.
\begin{lem}\label{lem:yepyep}
Suppose that $(j,k) \in A_{p}$. Then $\abs{\smash{   \theta_{j,k}'(y)   }} > \frac{1}{2} \ctaup \Lambda^{-n_{2}}$ for all $y\in I_{p}$. 
\end{lem}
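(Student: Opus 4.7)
The plan is to interpolate between the pointwise lower bound at $x_{p}$ from Lemma~\ref{lem:yep} and the desired lower bound on all of $I_{p}$ by using the uniform $\cC^{2}$ bound on $\theta_{j,k}$ supplied by Lemma~\ref{lem:taupprime}.

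First, I would strengthen Lemma~\ref{lem:yep} from a bound in terms of $J_{n_{2}}$ to a bound in terms of $\Lambda^{-n_{2}}$. Since $\abs{(f^{n_{2}})'}\leq \Lambda^{n_{2}}$ we have $J_{n_{2}}\geq \Lambda^{-n_{2}}$ everywhere, and therefore
\[
\tfrac{1}{2}\ctaup(J_{n_{2}}\circ g_{j}+J_{n_{2}}\circ g_{k})(x_{p})\geq \ctaup \Lambda^{-n_{2}}.
\]
So Lemma~\ref{lem:yep} already gives $\abs{\theta_{j,k}'(x_{p})}>\ctaup\Lambda^{-n_{2}}$ whenever $(j,k)\in A_{p}$.

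Next, for any $y\in I_{p}$, the mean value theorem together with Lemma~\ref{lem:taupprime} yields
\[
\abs{\theta_{j,k}'(y)-\theta_{j,k}'(x_{p})}\leq \ctauq\abs{y-x_{p}}\leq \ctauq\abs{I_{p}}\leq 2\ctauq\abs{b}^{-(1-\xi)},
\]
by the choice \eqref{eq:Ipart} of the partition ${\{I_{p}\}}_{p}$.

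The final step is to check that the error $2\ctauq\abs{b}^{-(1-\xi)}$ is negligible compared to $\tfrac{1}{2}\ctaup\Lambda^{-n_{2}}$ once $b_{0}$ is chosen large enough. Recalling $n_{2}=\lceil \ctwo \ln\abs{b}\rceil$ with $\ctwo=\frac{\xi}{2\ln\Lambda}$ from \eqref{eq:defconsts}, we get $\Lambda^{-n_{2}}\geq \Lambda^{-1}\abs{b}^{-\xi/2}$. Since $\xi<\tfrac{1}{2}$, the exponent $-(1-\xi)+\xi/2=-1+3\xi/2$ is strictly negative, so $\abs{b}^{-(1-\xi)}/\Lambda^{-n_{2}}\to 0$ as $\abs{b}\to\infty$. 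Enlarging $b_{0}$ if necessary ensures $2\ctauq\abs{b}^{-(1-\xi)}\leq \tfrac{1}{2}\ctaup\Lambda^{-n_{2}}$ for $\abs{b}\geq b_{0}$, and combining with the previous two inequalities gives $\abs{\theta_{j,k}'(y)}>\tfrac{1}{2}\ctaup\Lambda^{-n_{2}}$ as required. No step looks like a genuine obstacle; the only subtlety is to confirm that the choices of $\xi$, $\cone$, $\ctwo$ made back in \eqref{eq:defconsts} really do leave enough room for the $\cC^{2}$ distortion of $\theta_{j,k}$ on the scale $\abs{b}^{-(1-\xi)}$ to be absorbed, and the computation above shows they do.
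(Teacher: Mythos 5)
Your proof is correct and follows essentially the same route as the paper: replace $J_{n_2}$ by the uniform lower bound $\Lambda^{-n_2}$ in Lemma~\ref{lem:yep}, control the variation of $\theta_{j,k}'$ over $I_{p}$ by $\ctauq\abs{I_{p}}$ via Lemma~\ref{lem:taupprime}, and then check that $\abs{I_{p}}\Lambda^{n_{2}}\to 0$ as $\abs{b}\to\infty$ using the specific choices of $\xi$ and $\ctwo$ from~\eqref{eq:defconsts}. If anything your write-up is a little cleaner than the paper's, which records the bound $\Lambda^{-n_{2}}\le\abs{b}^{-\ctwo\ln\Lambda}$ rather than the more directly useful $\Lambda^{n_{2}}\le\Lambda\abs{b}^{\ctwo\ln\Lambda}$ coming from $n_{2}\le\ctwo\ln\abs{b}+1$; you state the needed direction explicitly.
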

\begin{proof}
By Lemma~\ref{lem:taupprime}  and Lemma~\ref{lem:yep}
we know that
$\abs{\smash{  \theta_{j,k}'(y)   }} 
>
 \ctaup \Lambda^{-n_{2}} 
 - \abs{I_{p}} \ctauq$. 
 To complete the proof it remains to show that $ \abs{I_{p}}  \leq \frac{\ctaup}{2 \ctauq} \Lambda^{-n_{2}}$.
Recall that, by choice of the partition, $\abs{I_{p}} \leq 2 \abs{b}^{-(1-\xi)}$ 
and note that $\Lambda^{-n_{2}} \leq \abs{b}^{-\rho_{2} \ln \Lambda}$.
This means that
$\abs{I_{p}} \leq \Lambda^{n_{2}} \abs{b}^{-(1- \xi - \ctwo \ln\Lambda )}$.
Furthermore, by choice of $\xi$ and $\ctwo$ we have $\ctwo = \frac{\xi}{2\ln\Lambda}$ and $\xi \leq \frac{1}{2}$.
Consequently
$\abs{I_{p}} \leq \Lambda^{n_{2}} \abs{b}^{-\frac{1}{4}}$
and so, again increasing $b_{0}$ if required,
$ \abs{I_{p}}  \leq \frac{\ctaup}{2 \ctauq} \Lambda^{-n_{2}}$
for all $\abs{b}\geq b_{0}$. 
\end{proof}

 The key part of the argument is the following lemma concerning oscillatory integrals.
\begin{lem}\label{lem:vandercorput}
 Suppose $J$ is an interval,  $\theta \in \cC^{2}(J,\bR)$, $K \in \cC^{1}(J,\bC)$, $b\in \bR \setminus \{0\}$ and there exists $\kappa>0$ such that $\inf \abs{\theta'} \geq \kappa$. %, $\sup \abs{\theta''/(\theta')^{2}} \leq c_1$.
 Then
 \[
  \abs{ \int_{J} K \cdot e^{ib \theta (x) } \ dx } 
  \leq 
   \frac{1}{\abs{b}}  \left(  \tfrac{1}{ \kappa} \sup \abs{K}  + \tfrac{1} { \kappa^2} \sup \abs{K}  \sup \abs{\theta''} \abs{J}  +  \tfrac{1 }{\kappa} \sup \abs{ K'} \abs{J} \right).
 \]
\end{lem}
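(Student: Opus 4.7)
The plan is to apply the standard integration-by-parts trick for oscillatory integrals, which is the prototype of a van der Corput-type estimate. The hypothesis $\inf|\theta'|\geq \kappa>0$ means the phase has no stationary points, so the exponential $e^{ib\theta}$ can be written as an exact derivative up to a factor $ib\theta'$. Concretely, since
\[
e^{ib\theta(x)} = \frac{1}{ib\,\theta'(x)}\frac{d}{dx}\bigl(e^{ib\theta(x)}\bigr),
\]
I would integrate by parts with $u = K/(ib\theta')$ and $dv = d(e^{ib\theta})$, producing one boundary contribution and one interior contribution. This is the entire idea; the rest is bookkeeping.

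For the boundary term, evaluation at the two endpoints of $J$ and the bound $|\theta'|\geq \kappa$ give a contribution of order $\frac{\sup|K|}{|b|\,\kappa}$, matching the first term on the right-hand side. For the interior term, differentiating the quotient $K/\theta'$ produces
\[
\frac{d}{dx}\!\left(\frac{K}{ib\,\theta'}\right) = \frac{1}{ib}\!\left(\frac{K'}{\theta'} - \frac{K\,\theta''}{(\theta')^{2}}\right),
\]
and each summand is estimated trivially by pulling out the $\mathbf{L^{\infty}}$ bounds of its pieces and using $|J|$ for the remaining $L^{1}$ norm over the interval. The first summand contributes $\frac{1}{|b|\kappa}\sup|K'|\,|J|$ and the second contributes $\frac{1}{|b|\kappa^{2}}\sup|K|\sup|\theta''|\,|J|$, which are precisely the remaining two terms in the claimed bound.

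I do not anticipate any real obstacle here: the only subtlety is that $K$ is complex-valued, but since integration by parts is linear the argument proceeds identically after splitting into real and imaginary parts (or simply noting that the fundamental theorem of calculus extends to $\mathbf{C}$-valued $\cC^{1}$ functions). The numerical constants in the final bound are slightly loose (in particular the boundary term naturally gives a factor $2$), but this is absorbed into the stated form without any loss that matters for the application in Section~\ref{sec:mainestimate}.
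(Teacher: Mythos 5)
Your proof is correct and takes essentially the same route as the paper: the non-stationary phase estimate via a single integration by parts using $e^{ib\theta} = (ib\theta')^{-1}\,\tfrac{d}{dx}e^{ib\theta}$. The paper's only cosmetic difference is that it first substitutes $y=\theta(x)$ (valid since $|\theta'|\geq\kappa>0$ makes $\theta$ monotone), integrates by parts in $y$, and then changes back; you integrate by parts directly in $x$, which is a bit shorter and yields the identical three terms. You are also right that the boundary term should naturally carry a factor $2$ (two endpoints), and in fact the paper's own derivation has the same feature even though the stated bound drops it --- this is an inconsequential looseness in the constant that is harmless for the application.
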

\begin{proof}[Proof of Lemma~\ref{lem:vandercorput}]
 First change variables, $y=\theta(x)$,   then integrate by parts
 \[
 \begin{aligned}
  \int_{J}  K \cdot  e^{ib \theta (x) } \ dx
  & = \int_{\theta(J)} \frac{K}{\theta'}\circ \theta^{-1}(y) e^{ib y } \ dy\\
  & = 
  \frac{i}{b}  \left[  \frac{K}{\theta'}\circ \theta^{-1}(y)   e^{ib y }  \right]_{\theta(J)} \\
 & \ \ \ \ + \frac{i}{b} \int_{\theta(J)} \left(  \frac{K  \theta''}{(\theta')^2 \cdot \theta'} +  \frac{K'}{(\theta')^2} \right) \circ \theta^{-1}(y) e^{ib y } \ dy.
    \end{aligned}
 \]
Changing variables again, we obtain
 \[
  \int_{J} K \cdot e^{ib \theta (x) } \ dx
= 
  \frac{i}{b}   \left[  \frac{K}{\theta'}   e^{ib \theta }  \right]_{J}    
  + \frac{i}{b} \int_{J} \left( \frac{ K\theta''}{(\theta')^2}+ \frac{K'}{\theta'}  \right) (x) e^{ib \theta(x) } \ dx.
 \]
 The required estimate follows immediately.
\end{proof}

 In preparation of apply the above lemma, note that \eqref{eq:gronwall} implies
 $\sup_{I_{p}} K_{j,k} \leq 2 K_{j,k}(x_{p})$
 and similarly 
  $\sup_{I_{p}} K_{j,k}' \leq 4 \cjp K_{j,k}(x_{p})$.
By Lemma~\ref{lem:yepyep} we know that 
$\abs{\smash{   \theta_{j,k}'   }} > \frac{1}{2} \ctaup \Lambda^{-n_{2}}$.
By Lemma~\ref{lem:taupprime} we know that 
$\abs{\smash{\theta_{jk}''}} \leq \ctauq$.
Using  these estimates with Lemma~\ref{lem:vandercorput} we obtain
\[
  \left|  \int_{I_{p}} ( K_{j,k} \cdot  e^{ib\theta_{j,k} })(z)    \ dz \right| 
 \leq 
  \frac{1}{\abs{b}} K_{j,k}(x_{p}) 
\cend \Lambda^{ n_{2}}
\]
where $\cend := 8\cjp (1+\ctauq + 2 \cjp)\ctaup^{-2}$.
This means that,  for the first sum in \eqref{eq:thetwopieces}, we obtain, using \eqref{eq:distortion} and Lemma~\ref{lem:otherdistortion}, the estimate
\begin{equation*}
%\begin{aligned}
\sum_{p} \sum_{\substack{j,k \\ (j,k) \in A_{p}}} \left|  \int_{I_{p}} ( K_{j,k} \cdot  e^{ib\theta_{j,k} })(z)    \ dz \right| 
 %&
 \leq 
\frac{1}{\abs{b}} \abs{b}^{1-\xi} \cend \Lambda^{n_2} \cly^{2} (6 \cly )^{2} \abs{\Omega}^{2}
%  \end{aligned}
\end{equation*}
(the term $\abs{b}^{1-\xi}$ comes from the sum over $p$).
Since $\abs{b} \geq e^{n_{2} \frac{2 \xi}{3  \cone}}$ (increasing $b_{0}$ again if required),
\[
 \frac{1}{\abs{b}} \abs{b}^{1-\xi}  \Lambda^{n_2}
 =
 \frac{1}{\abs{b}^{\xi}}  \Lambda^{n_2}
 \leq e^{-n_{2}(\frac{2\xi}{3\ctwo} - \ln \Lambda )}.
\]
Let $\cyonem := \frac{2\xi}{3\ctwo} - \ln \Lambda > 0$. This means that
\begin{equation}
 \label{eq:estimate1}
\sum_{p} \sum_{\substack{j,k \\ (j,k) \in A_{p}}} \left|  \int_{I_{p}} ( K_{j,k} \cdot  e^{ib\theta_{j,k} })(z)    \ dz \right| 
\leq 
\cend \cly^{2} (6 \cly )^{2} \abs{\Omega}^{2}    e^{-n_{2}\cyonem}.
\end{equation}
In order to estimate the final term in~\eqref{eq:jensen} we use~\eqref{eq:thetwopieces}, sum the estimates \eqref{eq:estimate2}  and \eqref{eq:estimate1},   to obtain 
\[
\sum_{p}  \sum_{j,k \in G_{p}} \left|  \int_{I_{p}} (K_{j,k} \cdot   e^{ib\theta_{j,k} })(z)    \ dz \right| 
 \leq 
\abs{\Omega}^{2}
e^{-n \calmost}
\left( (6 \cly )^{2}   \cly \cgam  +  \cend \cly^{2} (6 \cly )^{2} \right),
 \]
 where $\calmost := \frac{\cone}{\cone + \ctwo} \min(\ctransrate , \cyonem) >0$.
Let $\cm:=6 \cly  \left(   \cly \cgam  +  \cend \cly^{2}\right)^{\frac{1}{2}}$. Taking the square root %~\eqref{eq:jensen}
of the above
 \[
\begin{aligned}
\sum_{p} \int_{I_{p}}   \left|  \vphantom{\sum } \smash{  \sum_{j\in G_{p}} }
\left( { J_{n}} \cdot  e^{ib\tau_{m} }\right)\circ h_{j}(z) \right| \ dz
 &\leq \abs{\Omega} e^{-n \frac{\calmost}{2}} \cm
\end{aligned}
\]
Including also the estimate \eqref{eq:surplus} we have shown that (let $\cyes:= \min (\csurplus,\frac{\calmost}{2} )>0$)
\[
\int_{\base}   \left| \vphantom{\sum } \smash{  \sum_{j}  \left(  { J_{n}}  \cdot e^{ib\tau_{n} }  \right)  \circ h_{j}(z)  \cdot \one_{f^{n}\omega_{j}}}(z) \right| \ dz 
\leq
(\cm + \csurminus) \abs{\Omega} e^{-n(b) \cyes}.
\]
This completes the proof of Proposition~\ref{prop:mainlem}.

\section{Rate of Mixing}
\label{sec:rateofmixing}
Here we use the estimates of Proposition~\ref{prop:mainuse} concerning the twisted transfer operators in order to estimate the rate of mixing.
Let $g,h : \bT^{2} \to \bC$ be two observables. We assume, without loss of generality, that $g$ is mean zero. 
Denote by $\hat{g}_{b}$ and $\hat{h}_{b}$ their Fourier components (in the fibre coordinate), i.e.,
\[
 g(x,u) = \sum_{b\in \bZ} \hat{g}_{b}(x) e^{-ibu},
\]
and similarily for $h(x,u)$. 
Using the regularity of the observables (in particular the smoothness in the fibre direction) we have that $\norm{ \hat{g}_{b} }_{\mathbf{BV}} \leq (1+\abs{b})^{-1} \norm{g}_{\cC^{1}}$ (and for $h$ similarily). 
By simple manipulations we obtain the formula
\[
 \int_{\bT^{2}} \left( g \cdot h\circ F\right)(x,u) \ dx \ du
 =
 \sum_{b\in \bZ} \int_{\bT^{1}} \cL_{b}^{n} \hat{g}_{b}(x) \cdot \hat{h}_{b}(x) \ dx.
\]
We separate the sum into a finite number of terms where $\abs{b} \leq b_{0}$ and the infinite sum of the remaining terms. For the finite number of terms it suffices to use the quasi-compactness in a standard way using that the base map is mixing.

Recall that in Proposition~\ref{prop:mainuse} we obtained the estimate 
$  \norm{\smash{\cL_{b}^{n(b)}} }_{(b)}
  \leq
  e^{-n(b) \cten }$
where $  \cfour \ln \abs{b} \leq n(b) \leq  \cfour \ln \abs{b}  +2$.
We may assume that $\cten >0$ is sufficiently small that $\cfour \cten <1$.
Consequently the above estimate implies that there exists $\alpha\in (0,1)$ such that
$  \norm{\smash{\cL_{b}^{n}} }_{(b)}
  \leq
  \abs{b}^\alpha e^{-n \cten }$
  for all $n\in \bN$, $\abs{b} \geq b_{0}$.
  Note that
   \[
   \abs{\int_{\bT^{1}} \cL_{b}^{n} \hat{g}_{b}(x) \cdot \hat{h}_{b}(x) \ dx}
   \leq  2 \norm{\smash{\cL_{b}^{n}}   }_{(b)}    \norm{\smash{\hat{g}_{b} }  }_{\mathbf{BV}} \norm{\smash{\hat{h}_{b} }  }_{\mathbf{BV}}  
   \]
    since the $\mathbf{BV}$ norm dominates the $\mathbf{L^{\infty}}$ norm.
It remains to observe that
\[
  \sum_{\abs{b} \geq b_{0}} 
  \norm{\smash{\cL_{b}^{n}} }_{(b)}
  \norm{\smash{ \hat{g}_{b} }}_{\mathbf{BV}}
  \norm{\smash{ \hat{h}_{b}} }_{\mathbf{BV}}
  \leq  \sum_{\abs{b} \geq b_{0}}   \abs{b}^{-(2-\alpha)} e^{-n \cten }
  \norm{g}_{\cC^{1}}\norm{h}_{\cC^{1}}.
\]
Crucially $(2-\alpha) >1$ and so this is summable. 
This proves exponential mixing for $\cC^{1}$ observables which, by the usual argument~\cite[footnote~2]{oli1401}, implies exponential mixing for H\"older observables.

%%%%%%%%%%%%%%%%%%%%%%%%%%%%%%%

%%%%%%%%%%%%%%%%%%%%%%%%%%%%%%%

\vspace{10pt}

\end{document}